\newtheorem{thm}{Theorem}[section]
\newtheorem{theorem}[thm]{Theorem}
\newtheorem{proposition}[thm]{Proposition}
\newtheorem{lemma}[thm]{Lemma}
\newtheorem{claim}[thm]{Claim}
\newtheorem{corollary}[thm]{Corollary}
\def\hsymb#1{\mbox{\strut\rlap{\smash{\huge$#1$}}\quad}}
\newcommand{\te}{\tilde{e}}
\newcommand{\tu}{\tilde{u}}
\newcommand{\tv}{\tilde{v}}
\newcommand{\bp}{{\bm p}}
\newcommand{\bh}{{\bm h}}
\newcommand{\bmm}{{\bm m}}
\newcommand{\bb}{{\bm b}}
\title{Linking Rigid Bodies Symmetrically}
\author{Bernd Schulze \thanks{Corresponding author; 
Department of Mathematics and Statistics,
University of Lancaster,
Lancaster
LA1 4YF, United Kingdom
(\texttt{b.schulze@lancaster.ac.uk}); phone: +44 (0) 1524 592173.
}
\and
Shin-ichi Tanigawa\thanks{
  Research Institute for Mathematical Sciences, Kyoto University, Kyoto 606-8502 Japan
(\texttt{tanigawa@kurims.kyoto-u.ac.jp}).
  Supported by JSPS Grant-in-Aid for Young Scientist (B), 24740058.}
}
\begin{document}

\renewcommand{\thefootnote}{ }
\footnotetext{2010 {\em Mathematics Subject Classification}. Primary 52C25, 05B35, 70B10; Secondary 05C10, 68R10.}
\footnotetext{{\em Key words}. infinitesimal rigidity, body-hinge frameworks, symmetry, rigidity of graphs, rigidity matroids, signed-graphic matroids}
\renewcommand\thefootnote{\arabic{footnote}}

\maketitle

\begin{abstract}

The mathematical theory of rigidity of body-bar and body-hinge frameworks
provides a useful tool for analyzing the rigidity and flexibility of many articulated structures appearing in engineering, robotics and biochemistry. 
In this paper we develop a symmetric extension of this theory which 
permits a rigidity analysis of body-bar and body-hinge structures with point group symmetries.

The infinitesimal rigidity of body-bar frameworks can naturally be formulated in the language of the exterior (or Grassmann) algebra. Using this algebraic formulation, we derive symmetry-adapted rigidity matrices to analyze the infinitesimal rigidity of body-bar frameworks with Abelian point group symmetries in an arbitrary dimension. In particular, from the patterns of these new matrices, we derive  combinatorial characterizations of infinitesimally rigid body-bar frameworks which are generic with respect to a point group of the form $\mathbb{Z}/2\mathbb{Z}\times \dots \times \mathbb{Z}/2\mathbb{Z}$. 
Our characterizations are given in terms of packings of bases of signed-graphic matroids on quotient graphs.
Finally, we also extend our methods and results to body-hinge frameworks with Abelian point group symmetries in an arbitrary dimension. As special cases of these results, we obtain combinatorial characterizations of infinitesimally rigid body-hinge frameworks with $\mathcal{C}_2$ or  $\mathcal{D}_2$ symmetry - the most
common symmetry groups found in proteins. 
\end{abstract}


\section{Introduction}
\label{sec:intro}

An important application of rigidity theory is the rigidity and flexibility analysis of biomolecules and proteins, where an ideal molecule is modeled as a {\em body-hinge framework}, 
that is, a structural model consisting of rigid bodies connected, in pairs, by revolute hinges along assigned lines \cite{W1}.
A result by Tay~\cite{tay89,tay91} and Whiteley~\cite{wwmatun,TW1} asserts 
that a generic body-hinge framework is infinitesimally rigid in $\mathbb{R}^3$ if and only if
$5G$ contains six edge-disjoint spanning trees, where $G$ denotes the underlying graph obtained by
identifying each body with a vertex and each hinge with an edge, and $5G$ denotes the graph obtained 
from $G$ by replacing each edge by five parallel copies.  
Based on this result, efficient combinatorial algorithms have been used for analyzing the rigidity properties
of proteins  (see, e.g., \cite{Wbio,leestrei,jrtk}),
even though body-hinge frameworks arising from molecules do not fit the genericity assumption in Tay-Whiteley's theorem. However, a recent result by Katoh and Tanigawa \cite{kattan} successfully eliminated this assumption, and hence this approach for analyzing the flexibility of proteins is now proven to be mathematically rigorous.

However, many molecules and proteins (as well as many man-made structures such as buildings or mechanical linkages) exhibit non-trivial point group symmetries, and recent work has shown that symmetry can sometimes lead to additional flexibility in a structure (see, e.g., \cite{dimers,portaetal}). Thus, our goal in this paper is to develop a symmetric extension of  generic  rigidity theory which permits a rigidity analysis of structures that possess non-trivial symmetries. 
Our main result is an extension of Tay-Whiteley's theorem which characterizes rigid
symmetric body-hinge structures in terms of a graph packing condition.
This result leads to an efficient combinatorial algorithm for checking the infinitesimal (or static) rigidity properties of 
body-hinge frameworks in the presence of symmetry.  


The state of the art in the rigidity analysis of symmetric frameworks is as follows (see also \cite{schtan} for a list of recent papers on the subject).
The most basic structure in the context of rigidity theory is  a {\em bar-joint framework}, 
which is composed of rigid bars connected at their ends by flexible joints  \cite{W1}. 
In \cite{cfgsw} necessary conditions were derived for a symmetric bar-joint framework to be isostatic (i.e., minimally infinitesimally rigid) in $\mathbb{R}^d$ based upon a block-decomposition of the rigidity matrix  (see also \cite{gsw} for the analogous results for body-bar frameworks). Moreover, for some point groups in dimension 2, it was shown in \cite{BS3,BS4} that the conditions in \cite{cfgsw}, together with the standard Laman conditions \cite{Lamanbib}, are also sufficient for a $2$-dimensional bar-joint framework to be isostatic, if it is realized as generic as possible subject to the given symmetry constraints. However, since an infinitesimally rigid symmetric framework typically does not contain an isostatic subframework on the same vertex set with the same symmetry, these results do not provide a general test for infinitesimal rigidity of symmetric frameworks.  

An advanced approach for the rigidity analysis of symmetric bar-joint frameworks was recently established by us in \cite{schtan}, where we  extended the concept of the `orbit rigidity matrix' introduced in \cite{BSWW} to each of the irreducible representations of the group when the underlying symmetry group is Abelian.  
With the help of these new symmetry-adapted rigidity matrices, combinatorial characterizations of infinitesimally rigid symmetric bar-joint frameworks in the plane were established for several point groups \cite{schtan}. 

A natural and important question is whether one can extend these combinatorial results 
to symmetric frameworks in higher dimensions $d\geq 3$, but for this purpose one  first needs to find a combinatorial characterization of the graphs which form rigid bar-joint frameworks for all generic realizations (without symmetry) in Euclidean $d$-space. 
Unfortunately, finding such a characterization for $d\geq 3$ remains a long-standing open problem in discrete geometry \cite{W1}.


 However, for the special class of \emph{body-bar frameworks} -- which consist of rigid bodies connected by rigid bars (as shown in Figure~\ref{fig:bbpic}(a)) -- there exist neat combinatorial characterizations for generic rigidity in all dimensions \cite{Tay84}. Specifically, it was shown by Tay in 1984 that a generic realization of a multigraph $G$ as a body-bar framework in $d$-space is rigid if and only if $G$ contains ${d+1\choose 2}$ edge-disjoint spanning trees.
As mentioned above, it was independently confirmed by Tay  \cite{tay89,tay91} and Whiteley~\cite{wwmatun} that 
this combinatorial condition also characterizes rigid generic body-hinge frameworks. 
(See also \cite{jj} for further discussions on body-bar-hinge frameworks.)
Also it was recently confirmed that the even more special class of `molecular frameworks' also have the same good combinatorial theory as generic body-bar frameworks \cite{kattan}.
%

In this paper, we present several new results concerning the infinitesimal rigidity of symmetric body-bar and body-hinge frameworks by extending the basic approach for analyzing symmetric bar-joint frameworks described in \cite{schtan} to these structures. 

First, for any Abelian point group $\Gamma$ which acts freely on the bodies of an arbitrary-dimensional body-bar framework, we construct an `orbit rigidity matrix' for each of the irreducible representations of $\Gamma$  using a rigidity formulation of body-bar frameworks in terms of the exterior (or Grassmann) algebra \cite{Tay84,wwmatun,ww} (see Section~\ref{bbblock}). 
 
Note that a body can be considered as a complete bar-joint framework on joints affinely spanning $\mathbb{R}^d$.
In other words, a body-bar framework is a special case of a bar-joint framework
which consists of disjoint complete frameworks connected by  bars. 
Thus the infinitesimal rigidity of body-bar frameworks can be analyzed using rigidity matrices of bar-joint frameworks,
and  one could also use the constructions described in \cite{schtan}  to set up orbit rigidity matrices of symmetric body-bar frameworks. However, the infinitesimal motions of a $d$-dimensional body-bar framework can be expressed in the most natural way using the exterior algebra. In particular, this algebraic formulation allows us to extend the  combinatorial characterizations of rigid generic body-bar frameworks given in \cite{Tay84,wwmatun} to body-bar frameworks which are generic with respect to certain point group symmetries.
 
Specifically, in Section~\ref{bbcombchar}, we  derive combinatorial characterizations of infinitesimally rigid body-bar frameworks which are generic with respect to a point group of the form $\mathbb{Z}/2\mathbb{Z}\times \dots \times \mathbb{Z}/2\mathbb{Z}$. These characterizations are obtained by using signed-graphic matroids and by extending the tree-packing ideas in \cite{ww}.
In Section~\ref{sec:hinge}, we then also extend these results to body-hinge frameworks which are generic with respect to a group $\mathbb{Z}/2\mathbb{Z}\times \dots \times \mathbb{Z}/2\mathbb{Z}$ that acts freely on the structure. Our characterization will be given in terms of a tree-like subgraph packing condition for the quotient graphs, more precisely in terms of bases of signed-graphic matroids on the quotient graphs.

Finally, in Section~\ref{sec:furtherwork}, we discuss some further applications of our results  and methods, and propose some directions for future work.

\section{Body-bar frameworks}
\label{sec:body}

In this section we recall the description of the rigidity matrix of a body-bar framework in terms of the exterior algebra
given by Tay~\cite{Tay84} and Whiteley~\cite{wwmatun}.
To this end we first provide some preliminary facts on Pl{\"u}cker coordinates. 

 


\subsection{Pl{\"u}cker coordinates}
Let $p\in \mathbb{R}^d$.
The homogeneous coordinates of $p$ are denoted by $\hat{p}$, that is,
$\hat{p}=\begin{pmatrix} p \\ 1 \end{pmatrix}\in \mathbb{R}^{d+1}$.
For affinely independent points $p_1,\dots, p_k\in \mathbb{R}^{d}$,
the Pl{\"u}cker coordinates of the (oriented) $k$-simplex determined by $p_1,\dots, p_k$
is the ${d+1\choose k}$-dimensional vector $\hat{p}_1\wedge \dots \wedge \hat{p}_k$
whose entries are the determinants of
the ${d+1\choose k}$ submatrices of size $k\times k$ of the $(d+1)\times k$ matrix
$\begin{pmatrix} \hat{p}_1 & \dots & \hat{p}_k\end{pmatrix}$.
Hence we may index the coordinates of $\hat{p}_1\wedge \dots \wedge \hat{p}_k$ by $k$-tuples
$(i_1,\dots, i_k)$, where $1\leq i_1<\dots <i_k\leq d+1$,
and we may assume that the coordinates are arranged in the lexicographical order of the indices.
The vector $\hat{p}_1\wedge \dots \wedge \hat{p}_k$
is sometimes called a \emph{$k$-extensor} in the context of rigidity theory.

For any $\hat{p}_1,\dots, \hat{p}_k\in \mathbb{R}^{d+1}$,
we may define the wedge product $\hat{p}_1\wedge \dots \wedge \hat{p}_k$
by using the same definition (taking the determinants of the ${d+1\choose k}$ submatrices of size $k\times k$
of $\begin{pmatrix} \hat{p}_1 & \dots & \hat{p}_k\end{pmatrix}$).
Let $Gr(k,d+1)=\{\hat{p}_1 \wedge \dots \wedge \hat{p}_k
\mid \hat{p}_1,\dots,\hat{p}_k\in \mathbb{R}^{d+1}\setminus\{0\}\}$.
Then $Gr(k,d+1)$ linearly spans a ${d+1\choose k}$-dimensional space
which is called the {\em $k$-th exterior power} $\bigwedge^k \mathbb{R}^{d+1}$ of $\mathbb{R}^{d+1}$.

$\bigwedge^k \mathbb{R}^{d+1}$ and $\bigwedge^{d+1-k} \mathbb{R}^{d+1}$ are dual to each other
via the product
$\circ:\bigwedge^k\mathbb{R}^{d+1}\times \bigwedge^{d+1-k}\mathbb{R}^{d+1}\rightarrow \mathbb{R}$
which is defined by
\[
 p\circ q=\sum_{i_1<\dots<i_k}{\rm sign}(\sigma) p_{i_1,\dots, i_k}q_{j_1,\dots,j_{d+1-k}}
\]
for $p\in \bigwedge^k\mathbb{R}^{d+1}$ and $q\in \bigwedge^{d+1-k} \mathbb{R}^{d+1}$,
where $p_{i_1,\dots, i_k}$ and $q_{j_1,\dots,j_{d+1-k}}$ denote
the $(i_1,\dots, i_k)$-th coordinate of $p$ and the $(j_1,\dots,j_{d+1-k})$-th coordinate of $q$, respectively,
$j_1,\dots, j_{d+1-k}$ is the complement of $i_1,\dots, i_k$ in $\{1,2,\dots, d+1\}$ with
$j_1<\dots<j_{d+1-k}$, and
${\rm sign}(\sigma)$ is the sign of the permutation
$\sigma=\begin{pmatrix}i_1 & \dots & i_k & j_1 & \dots & j_{d+1-k} \\ 1 & \dots & k & k+1 & \dots & d+1 \end{pmatrix}$.
For example, for $d=3$ and $k=2$, we have
$p\circ q=p_{1,2}q_{3,4}-p_{1,3}q_{2,4}+p_{1,4}q_{2,3}+p_{2,3}q_{1,4}-p_{2,4}q_{1,3}+p_{3,4}q_{1,2}$.
In general, this product has the following useful property:
A $k$-dimensional linear subspace $X$ and a $(d+1-k)$-dimensional linear subspace $Y$ have a nonzero intersection if and only if the Pl{\"u}cker coordinates $p$  of a $k$-simplex in $X$ and the Pl{\"u}cker coordinates $q$ of
a $(d+1-k)$-simplex in $Y$
satisfy $p\circ q=0$.
This is because
if $p=\hat{p}_1\wedge \dots \wedge \hat{p}_k\in Gr(k,d+1)$ and
$q=\hat{q}_1\wedge \dots \wedge \hat{q}_{d+1-k}\in Gr(d+1-k,d+1)$, then
$p\circ q=\hat{p}_1\wedge \dots \wedge \hat{p}_k\wedge
\hat{q}_1\wedge \dots \wedge \hat{q}_{d+1-k}$,
which is equal to the determinant of a square matrix obtained
by aligning $\hat{p}_1,\dots, \hat{p}_k, \hat{q},\dots, \hat{q}_{d+1-k}$.

Note that both $\bigwedge^{k} \mathbb{R}^{d+1}$ and $\bigwedge^{d+1-k}\mathbb{R}^{d+1}$ are ${d+1\choose k}$-dimensional
linear spaces, and there is a well-known isomorphism between them, known as the Hodge star operator.
Let ${\bf e}_1,\dots, {\bf e}_{d+1}$ be the standard basis of $\mathbb{R}^{d+1}$.
The Hodge star operator is the linear operator $\ast\colon \bigwedge^k \mathbb{R}^{d+1}\rightarrow \bigwedge^{d+1-k} \mathbb{R}^{d+1}$ defined by
\[
 \ast({\bf e}_{i_1}\wedge \dots \wedge {\bf e}_{i_k})
={\rm sign}(\sigma)  {\bf e}_{j_1}\wedge \dots \wedge {\bf e}_{j_{d+1-k}},
\]
where $j_1,\dots, j_{d+1-k}$ is the complement of $i_1,\dots, i_k$ in $\{1,2,\dots, d+1\}$.
For example, if $d=3$ and $k=2$, $\ast q=(q_{3,4},-q_{2,4},q_{2,3}, q_{1,4},-q_{1,3},q_{1,2})$ for $q=(q_{1,2},q_{1,3},q_{1,4}, q_{2,3},q_{2,4},q_{3,4})$.

By identifying $\bigwedge^k \mathbb{R}^{d+1}$ with $\bigwedge^{d+1-k} \mathbb{R}^{d+1}$ via $\ast$ and identifying $\bigwedge^k \mathbb{R}^{d+1}$ with  $\mathbb{R}^{d+1\choose k}$,
we can regard $\circ$ as an inner product $\langle\cdot,\cdot\rangle$ in $\mathbb{R}^{d+1\choose k}$
since $p\circ q=\langle p, \ast q\rangle$.

\subsection{Rigidity matrices of body-bar frameworks}
\label{subsec:body}
A body-bar framework is a structural model consisting of rigid bodies which are pairwise connected by rigid bars as shown in Figure~\ref{fig:bbpic}.
We identify each body with a vertex and each bar with an edge
to indicate the underlying incidence of bodies and bars in the body-bar framework (see also Figure~\ref{fig:bbpic}(b)). More formally,
we define a {\em body-bar framework} to be a pair $(G,\bb)$ of an undirected multigraph $G$ and a bar-configuration\footnote{Note that
an edge $\{u,v\}$ is an unordered pair, whereas  $\hat{p}_{e,u}\wedge \hat{p}_{e,v}$ is ordered (i.e., $\hat{p}_{e,u}\wedge \hat{p}_{e,v}=-\hat{p}_{e,v}\wedge \hat{p}_{e,u}$).
Formally, we should define $\bb$ in such a way that $\bb: E(G)\rightarrow Gr(2,d+1)/\{1,-1\}$,
but for the sake of simplicity of the description we will use the definition of (\ref{eq:bar_conf}).
In fact, for deciding whether the framework is infinitesimally rigid or not, we just need the linear space spanned
by $\hat{p}_{e,u}\wedge \hat{p}_{e,v}$ for each bar.}
\begin{align}
\label{eq:bar_conf}
\begin{split}
\bb:\quad E(G) \quad &\rightarrow Gr(2,d+1) \\
e=\{u,v\} &\mapsto \hat{p}_{e,u}\wedge \hat{p}_{e,v}.
\end{split}
\end{align}
That is, $\bb(\{u,v\})=\hat{p}_{e,u}\wedge \hat{p}_{e,v}$ indicates the Pl{\"u}cker coordinates of
the bar connecting the point $p_{e,u}$ in the body $u$ and the point $p_{e,v}$ in the body $v$.
(See again Figure~\ref{fig:bbpic} for an example.)

\begin{figure}[t]
\begin{center}
\begin{tikzpicture}[very thick,scale=0.9]
\tikzstyle{every node}=[circle, draw=black, fill=white, inner sep=0pt, minimum width=5pt];
\filldraw[fill=black!20!white, draw=black, thin](0,-1.8)ellipse(1.2cm and 0.35cm);
\filldraw[fill=black!20!white, draw=black, thin](0,-0.2)ellipse(1.2cm and 0.35cm);

\node [circle, draw=black!20!white, shade, ball color=black!40!white, inner sep=0pt, minimum width=7pt](p1) at (-0.3,-1.95) {};
\node [circle, draw=black!20!white, shade, ball color=black!40!white, inner sep=0pt, minimum width=7pt](p2) at (-0.76,-1.85) {};
\node [circle, draw=black!20!white, shade, ball color=black!40!white, inner sep=0pt, minimum width=7pt](p4) at (0.16,-1.83) {};
\node [circle, draw=black!20!white, shade, ball color=black!40!white, inner sep=0pt, minimum width=7pt](p5) at (0.79,-1.74) {};
\node [circle, draw=black!20!white, shade, ball color=black!40!white, inner sep=0pt, minimum width=7pt](p6) at (0.43,-1.95) {};

\node [circle, draw=black!20!white, shade, ball color=black!40!white, inner sep=0pt, minimum width=7pt](t1) at (0.18,-0.025) {};
\node [circle, draw=black!20!white, shade, ball color=black!40!white, inner sep=0pt, minimum width=7pt](t3) at (-0.5,-0.1) {};
\node [circle, draw=black!20!white, shade, ball color=black!40!white, inner sep=0pt, minimum width=7pt](t4) at (0.43,-0.23) {};
\node [circle, draw=black!20!white, shade, ball color=black!40!white, inner sep=0pt, minimum width=7pt](t5) at (0.74,-0.11) {};
\node [circle, draw=black!20!white, shade, ball color=black!40!white, inner sep=0pt, minimum width=7pt](t6) at (-0.21,-0.3) {};

\draw(p1)--(t1);
\draw(p2)--(t3);
\draw(p4)--(t4);
\draw(p5)--(t5);
\draw(p6)--(t6);
\node [draw=white, fill=white] (a) at (0,-3) {(a)};

\node [draw=white, fill=white,rectangle] (a) at (-0.9,-1) {$e$};
\node [draw=white, fill=white,rectangle] (b) at (-0.7,0.36) {$p_{e,u}$};
\node [draw=white, fill=white,rectangle] (c) at (-0.9,-2.3) {$p_{e,v}$};
\node [draw=white, fill=white,rectangle] (c) at (0,0.5) {$u$};
\node [draw=white, fill=white,rectangle] (c) at (0,-2.45) {$v$};
\node [draw=white, fill=white,rectangle] (c) at (-2,-0.9) {$(G,\bb)$};

\end{tikzpicture}
\hspace{1.5cm}
     \begin{tikzpicture}[very thick,scale=1]
\tikzstyle{every node}=[circle, draw=black, fill=white, inner sep=0pt, minimum width=5pt];
       \path (0.9,-1.7) node (p4) [label =  below: $v$] {} ;
       \path (0.9,-0.3) node (p8) [label =  above: $u$] {} ;
       \draw (p4) -- (p8);
       \path
(p4) edge [bend right=22] (p8);
  \path
(p4) edge [bend right=44] (p8);
 \path
(p4) edge [bend left=44] (p8);
  \path
(p4) edge [bend left=22] (p8);

\node [draw=white, fill=white,rectangle] (c) at (0.2,-1) {$G$};

\node [draw=white, fill=white] (a) at (0.9,-3) {(b)};
       \end{tikzpicture}

\end{center}
\vspace{-0.3cm}
\caption{A (non-symmetric) $3$-dimensional body-bar framework $(G,\bb)$ (a) and its underlying multigraph $G$ (b). We may think of each of the two bodies of $(G,\bb)$ as a complete bar-joint framework on the end-points of the bars attached to the body.}
\label{fig:bbpic}
\end{figure}

An infinitesimal motion of a body-bar framework $(G,\bb)$ is defined as
$\bmm:V(G)\rightarrow \mathbb{R}^{d+1\choose 2}$
satisfying
\begin{equation}
\label{eq:inf_body}
\langle \bmm(u)-\bmm(v), \bb(e)\rangle=0 \qquad \text{for all } e=\{u,v\}\in E(G).
\end{equation}
It should be noted that (\ref{eq:inf_body}) is essentially equivalent to 
the first-order length constraint appearing in the infinitesimal (or static) analysis of bar-joint frameworks,
as $\bb(e)$ denotes (the coordinates of) the direction from $\bp_{e,u}$ to $\bp_{e,v}$.

Observe that $\bmm$ is an infinitesimal motion if $\bmm(u)=\bmm(v)$ for all $u,v\in V(G)$.
Such a motion is called a trivial (infinitesimal) motion.
Thus, the set of trivial motions forms a ${d+1\choose 2}$-dimensional linear space.
$(G,\bb)$ is called {\em infinitesimally rigid} if all infinitesimal motions of $(G,\bb)$ are trivial.

The {\em rigidity matrix} $R(G,\bb)$ of $(G,\bb)$ is the $|E(G)|\times {d+1\choose 2}|V(G)|$ matrix
defined by
\begin{displaymath} \bordermatrix{& & & & u & & & & v & & & \cr & & & &  & & \vdots & &  & & &
\cr e=\{u,v\} & 0 & \ldots &  0 & \bb(e) & 0 & \ldots & 0 & -\bb(e) &  0 &  \ldots&  0 \cr & & & &  & & \vdots & &  & & &
}
\textrm{,}\end{displaymath}
 that is, $R(G,\bb)$ is the matrix associated with the linear system (\ref{eq:inf_body}).
Note that $(G,\bb)$ is infinitesimally rigid if and only if ${\rm rank}\ R(G,\bb)={d+1\choose 2}(|V(G)|-1)$.

A bar-configuration $\bb$ is said to be \emph{generic} if
$\{p_{e,v}, p_{e,u}\mid e=\{u,v\}\in E(G)\}$ is algebraically independent over $\mathbb{Q}$.
Tay~\cite{Tay84} proved that if $\bb$ is generic, then
$(G,\bb)$ is infinitesimally rigid if and only if 
$G$ contains ${d+1\choose 2}$ edge-disjoint spanning trees.
%
We shall give a symmetric extension of this result in Theorem~\ref{thm:comb_body}. 

\section{Symmetric body-bar frameworks}
\subsection{Symmetric multigraphs}
\label{subsec:multigraphs}

In order to develop a rigidity theory for symmetric body-bar frameworks, we first need to introduce some basic concepts concerning symmetric graphs.

Let $G$ be a finite simple graph.
An {\em automorphism} of $G$ is a permutation $\pi:V(G)\rightarrow V(G)$ such that
$\{u,v\}\in E(G)$ if and only if $\{\pi(u),\pi(j)\}\in E(G)$.
The set of all automorphisms of $G$ forms a subgroup of the symmetric group on $V(G)$,
known as the {\em automorphism group} ${\rm Aut}(G)$ of $G$.
An {\em action} of a group $\Gamma$ on $G$ is a group homomorphism $\theta:\Gamma \rightarrow {\rm Aut}(G)$.
An action $\theta$ is called {\em free} on $V(G)$ (resp., $E(G)$)
if  $\theta(\gamma)(v)\neq v$ for any $v\in V(G)$ (resp., $\theta(\gamma)(e)\neq e$ for any $e\in E(G)$) and
any non-identity $\gamma\in \Gamma$.
We say that a graph $G$ is {\em $\Gamma$-symmetric} (with respect to $\theta$)
if $\Gamma$ acts on $G$ by $\theta$.
Throughout the paper, we only consider the case when $\theta$ is free on $V(G)$, and we
omit to specify the action $\theta$, if it is clear from the context.
We then denote $\theta(\gamma)(v)$ by  $\gamma v$.

For a $\Gamma$-symmetric graph $G$,
the {\em quotient graph} $G/\Gamma$ is a multigraph whose vertex set is the set $V(G)/\Gamma$
of vertex orbits and whose edge set is the set $E(G)/\Gamma$ of edge orbits.
Several distinct graphs may have the same quotient graph.
However, if we assume that the underlying action is free on $V(G)$,
then a gain labeling  makes the relation one-to-one as explained below.

Let $H$ be a directed graph which may contain multiple edges and loops, and let $\Gamma$ be a group.
A {\em $\Gamma$-gain graph} (or $\Gamma$-labeled graph)  is a pair $(H,\psi)$ in which each edge is associated with an
element of $\Gamma$ via a {\em gain function} $\psi:E(H)\rightarrow \Gamma$.

Given a $\Gamma$-symmetric graph $G$, 
we arbitrarily choose a vertex $v$ as a representative vertex from each vertex orbit.
Then each orbit is of the form $\Gamma v=\{gv\mid g\in \Gamma\}$.
If the action is free,
an edge orbit connecting $\Gamma u$ and $\Gamma v$ in $G/\Gamma$
can be written as $\{\{gu,ghv\}\mid g\in \Gamma \}$
for a unique $h\in \Gamma$.
We then orient the edge orbit from $\Gamma u$ to $\Gamma v$ in $G/\Gamma$
and assign to it the gain $h$.
In this way, we obtain {\em the quotient $\Gamma$-gain graph}, denoted by $(G/\Gamma,\psi)$.
$(G/\Gamma,\psi)$ is unique up to choices of representative vertices.
Figure \ref{fig:ggex} illustrates an example, where $\Gamma$ is the reflection group $\mathcal{C}_s$.

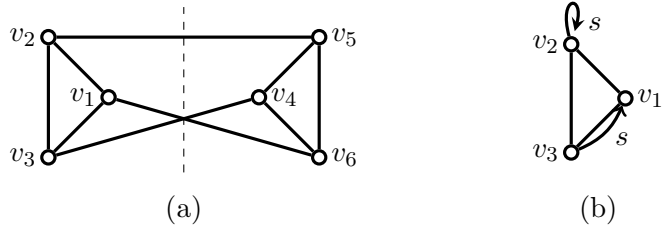
\begin{figure}[htp]
\begin{center}
  \begin{tikzpicture}[very thick,scale=1]
\tikzstyle{every node}=[circle, draw=black, fill=white, inner sep=0pt, minimum width=5pt];
     \path (-1,0) node (p1)  [label = left: $v_1$]{} ;
    \path (1,0) node (p2) [label = right: $v_4$]{}  ;
    \path (-1.8,0.8) node (p3) [label = left: $v_2$]{} ;
     \path (1.8,0.8) node (p4)  [label = right: $v_5$]{} ;
     \path (-1.8,-0.8) node (p5)[label = left: $v_3$]{} ;
     \path (1.8,-0.8) node (p6) [label = right: $v_6$]{};
\draw (p1)  --  (p3);
        \draw (p1)  --  (p5);
              \draw (p6)  --  (p1);

        \draw (p2)  --  (p4);
        \draw (p2)  --  (p6);
        \draw (p4)  --  (p3);
        \draw (p5)  --  (p2);

        \draw (p5)  --  (p3);
        \draw (p4)  --  (p6);
        \draw[dashed,thin] (0,-1)  --  (0,1.2);

           \node [rectangle, draw=white, fill=white] (b) at (0,-1.5) {(a)};
        \end{tikzpicture}
        \hspace{2cm}
         \begin{tikzpicture}[very thick,scale=0.9]
\tikzstyle{every node}=[circle, draw=black, fill=white, inner sep=0pt, minimum width=5pt];
     \path (-1.2,0.15) node (p1) [label = right: $v_1$]{} ;
       \path (-2,0.95) node (p3) [label = left: $v_2$]{} ;
      \path (-2,-0.65) node (p5) [label = left: $v_3$]{} ;
      \draw (p1)  --  (p3);
        \draw (p1)  --  (p5);
        \draw (p3)  --  (p5);
        \path
(p3) edge [loop above,->, >=stealth,shorten >=2pt,looseness=26] (p3);
\path
(p5) edge [->,bend right=22] (p1);
\node [rectangle, draw=white, fill=white] (b) at (-1.25,-0.45) {$s$};
\node [rectangle, draw=white, fill=white] (b) at (-1.65,1.25) {$s$};
\node [rectangle, draw=white, fill=white] (b) at (-1.6,-1.5) {(b)};
        \end{tikzpicture}
        \end{center}
         \caption{A $\mathcal{C}_s$-symmetric graph (a) and its quotient gain graph (b), where $\mathcal{C}_s=\{id,s\}$. For simplicity, we omit the direction and the label of every edge with gain $id$.}
 \label{fig:ggex}
\end{figure}

The map $c:G\rightarrow H$ defined by $c(gv)=v$ and $c(\{gu,g\psi(e)v\})=(u,v)$ is called a {\em  covering map}.
In order to avoid confusion, throughout the paper, a vertex or an edge in a quotient gain graph $H$
is denoted with the mark tilde, e.g., $\tilde{v}$ or $\tilde{e}$.
Then the fiber $c^{-1}(\tilde{v})$ of a vertex $\tilde{v}\in V(H)$ and the fiber $c^{-1}(\tilde{e})$
of an edge $\tilde{e}\in E(H)$
coincide with a vertex orbit and an edge orbit, respectively, in $G$.

Since the underlying graph of body-bar frameworks are multigraphs,  
we need to extend the definition of symmetric graphs from simple graphs to multigraphs.
This can be done in a straightforward fashion: a multigraph $G$ is 
{\em $\Gamma$-symmetric with respect to $\theta:\Gamma\rightarrow {\rm Aut}(G)$}
if $\theta$ is a group homomorphism.
By fixing a representative vertex for each vertex orbit
we can define the {\em quotient $\Gamma$-gain graph} in the analogous way.
However, in the case of multigraphs,
distinct $\Gamma$-symmetric multigraphs may lead to the same $\Gamma$-gain graph.

To see this, consider a $\mathbb{Z}/2\mathbb{Z}$-gain graph with one vertex $\tilde{v}$ and
one loop $\te$.
Let the fiber of $\tilde{v}$ be $\{v,v'\}$. Then, if $\Gamma$  acts freely on the edge set,
the fiber of $\te$ is the set consisting of two parallel edges joining $v$ and $v'$; otherwise,
if $\Gamma$ does not act freely on the edge set,
the fiber of $\te$ is the set consisting of the single edge $\{v,v'\}$ (see Figure~\ref{fig:bbpicfree}).


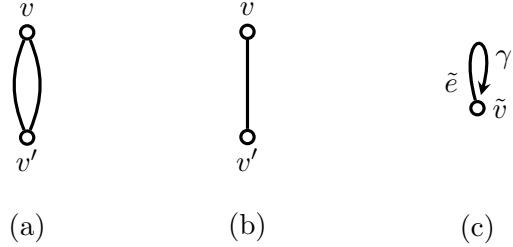
\begin{figure}[htp]
\begin{center}
\begin{tikzpicture}[very thick,scale=1]
\tikzstyle{every node}=[circle, draw=black, fill=white, inner sep=0pt, minimum width=5pt];
       \path (0,-1.7) node (p1)  {} ;
 \path (0,-0.3) node (p2)  {} ;

\path
(p1) edge [bend right=22] (p2);

\path
(p1) edge [bend left=22] (p2);

\node [draw=white, fill=white] (a) at (0,-2.9) {(a)};

\node [draw=white, fill=white,rectangle] (c) at (0,0) {$v$};
\node [draw=white, fill=white,rectangle] (c) at (0,-2) {$v'$};

\end{tikzpicture}
\hspace{2cm}
\begin{tikzpicture}[very thick,scale=1]
\tikzstyle{every node}=[circle, draw=black, fill=white, inner sep=0pt, minimum width=5pt];
       \path (0,-1.7) node (p1)  {} ;
 \path (0,-0.3) node (p2)  {} ;

\draw (p1) -- (p2);

\node [draw=white, fill=white] (a) at (0,-2.9) {(b)};

\node [draw=white, fill=white,rectangle] (c) at (0,0) {$v$};
\node [draw=white, fill=white,rectangle] (c) at (0,-2) {$v'$};\end{tikzpicture}
\hspace{2cm}
   \begin{tikzpicture}[very thick,scale=1]
\tikzstyle{every node}=[circle, draw=black, fill=white, inner sep=0pt, minimum width=5pt];
       \path (0,-1.3) node (p1)  {} ;

 \path
(p1) edge [loop above,->, >=stealth,shorten >=2pt,looseness=46] (p1);

\node [draw=white, fill=white,rectangle] (c) at (-0.35,-0.95) {$\tilde{e}$};
\node [draw=white, fill=white,rectangle] (c) at (0.3,-1.3) {$\tilde{v}$};

\node [draw=white, fill=white,rectangle] (c) at (0.35,-0.65) {$\gamma$};

\node [draw=white, fill=white] (a) at (0,-2.9) {(c)};
       \end{tikzpicture}

\end{center}
\vspace{-0.3cm}
\caption{Two distinct $\Gamma$-symmetric multigraphs ((a),(b)) which may have the same quotient $\Gamma$-gain graph (c). In the case of (a) we have $L=\emptyset$, whereas in the case of (b) we have $L=\{\te\}$.}
\label{fig:bbpicfree}
\end{figure}

Therefore, to impose a one-to-one correspondence between $\Gamma$-symmetric multigraphs
and quotient graphs (up to the choice of representative vertices),
we equip the quotient graph $H$ with
a gain labeling $\psi:E(H)\rightarrow \Gamma$ and also with the set $L$ of loops in $H$ that correspond to edge orbits of $G$ on which $\Gamma$ does not act freely via $\theta$.
Note that $L\subseteq \{\te\in E(H)\mid \text{ $\te$ is a loop with $\psi(\te)^2=id$} \}$.
 (See also Fig.~\ref{fig:bbpic2} for another example.)


\subsection{Symmetric body-bar frameworks}
\label{subsec:body_symmetric}
%

Let us first recall some basic facts regarding group actions on  exterior product spaces.
Suppose that $\Gamma$ has an orthogonal representation $\hat{\tau}:\Gamma\rightarrow O(\mathbb{R}^{d+1})$.
Then there is a unique representation
$\hat{\tau}^{(2)}:\Gamma\rightarrow O(\bigwedge^2 \mathbb{R}^{d+1})$ induced by $\hat{\tau}$ such that
$\hat{\tau}^{(2)}(\hat{p}\wedge \hat{q})=\hat{\tau}(\hat{p})\wedge \hat{\tau}(\hat{q})$
for any $\hat{p}\wedge \hat{q}\in Gr(2,d+1)$.

In the following, we will give an explicit definition of $\hat{\tau}^{(2)}$.
For an orthogonal matrix $A$ of size $(d+1)\times (d+1)$,
we define a matrix $A^{(2)}$  of size ${d+1\choose 2}\times {d+1 \choose 2}$ as follows.
Assume that each row and each column of $A^{(2)}$ is indexed
by pairs $(i,j)$ and $(k,l)$, where $1\leq i<j\leq d+1$ and $1\leq k<l\leq d+1$,
respectively.
Then the entries of $A^{(2)}$ are given by
\[
 A^{(2)}[(i,j),(k,l)]={\rm det}\ A_{i,j}^{k,l},
\]
where $A_{i,j}^{k,l}$ is the $2\times 2$-submatrix
of $A$ induced by the $i$-th and the $j$-th rows and by the $k$-th and the $l$-th columns.

For $\hat{\tau}$, define $\hat{\tau}^{(2)}$ by $\hat{\tau}^{(2)}(\gamma)=(\hat{\tau}(\gamma))^{(2)}$ for every
$\gamma\in \Gamma$.
Then it is known that $\hat{\tau}^{(2)}$ is a well-defined representation of $\Gamma$.
Moreover, if $\hat{\tau}$ is an orthogonal representation, then
$\hat{\tau}^{(2)}$ is also an orthogonal representation.
(To see this, consider a matrix $A=\hat{\tau}(\gamma)$.
Then  we have
$(A^{(2)})^{\top}=(A^{\top})^{(2)}$ by definition, and $(A^{-1})^{(2)}=(A^{(2)})^{-1}$ since
$\hat{\tau}^{(2)}$ is a group representation.
Therefore, we have
$(A^{(2)})^{\top}=(A^{\top})^{(2)}=(A^{-1})^{(2)}=(A^{(2)})^{-1}$.)

For any $1\leq k\leq d+1$, one can define an orthogonal representation
$\hat{\tau}^{(k)}:\Gamma\rightarrow O(\bigwedge^k\mathbb{R}^{d+1})$ in
the same manner.

Now let us return to symmetric body-bar frameworks.
Let $\Gamma$ be a finite group and let $\tau:\Gamma\rightarrow O(\mathbb{R}^{d})$.
We define the \emph{augmented representation} $\hat{\tau}:\Gamma\rightarrow O(\mathbb{R}^{d+1})$
 by $\hat{\tau}(\gamma)=\begin{pmatrix} \tau(\gamma) & 0 \\ 0 & 1 \end{pmatrix}$.
We say that a body-bar framework $(G,\bb)$ is {\em $\Gamma$-symmetric} (with respect to $\theta$ and $\tau$)
if $G$ is $\Gamma$-symmetric with an action $\theta:\Gamma\rightarrow {\rm Aut}(G)$ and
\begin{equation}
\label{eq:body_bar_sym0}
\hat{\tau}(\gamma)\hat{p}_{e,v}=\hat{p}_{\theta(\gamma)e,\theta(\gamma)v} \qquad
\text{for all $\gamma\in \Gamma$ and $e=\{u,v\}\in E(G)$. }
\end{equation}
This implies
\begin{equation}
\label{eq:body_bar_sym}
\bb(\theta(\gamma)e)=\hat{\tau}^{(2)}(\gamma)\bb(e) \qquad \text{for all } e=\{u,v\}\in E(G).
\end{equation}

We denote by $P_V:\Gamma\rightarrow GL(\mathbb{R}^V)$
the linear representation of $\Gamma$ induced by $\theta$ over $V(G)$,
that is,
$P_V(\gamma)$ is the
permutation matrix of the permutation $\theta(\gamma)$ of $V(G)$.
Specifically, $P_V(\gamma)=[\delta_{i,\theta(\gamma)(j)})]_{i,j}$, where $\delta$ denotes the Kronecker delta symbol.
Similarly, let $P_E:\Gamma\rightarrow GL(\mathbb{R}^E)$
be the linear representation of $\Gamma$ consisting of
permutation matrices of permutations induced by $\theta$ over $E(G)$.

The following is the counterpart of \cite[Theorem~3.1]{schtan}, where $\otimes$ stands for the Kronecker product (the tensor product).
We omit the identical proof.
\begin{theorem}
\label{thm:body_block}
Let $\Gamma$ be a finite group with $\tau:\Gamma\rightarrow O(\mathbb{R}^d)$,
$G$ be a $\Gamma$-symmetric graph with a free action $\theta$ on $V(G)$ and
$(G,\bb)$ be a $\Gamma$-symmetric body-bar framework with respect to $\theta$ and $\tau$.
Then $R(G,\bb)$ is an intertwiner of $\hat{\tau}^{(2)}\otimes P_V$ and $P_E$,
i.e., $R(G,\bb) (\hat{\tau}^{(2)}\otimes P_V) = P_E R(G,\bb)$.
\end{theorem}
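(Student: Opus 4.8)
The plan is to fix an element $\gamma\in\Gamma$ and verify the matrix identity $R(G,\bb)\,\bigl(\hat{\tau}^{(2)}(\gamma)\otimes P_V(\gamma)\bigr)=P_E(\gamma)\,R(G,\bb)$ directly, by evaluating both sides on an arbitrary column vector and reading off the coordinate indexed by an arbitrary edge. Concretely, I identify $\mathbb{R}^{{d+1\choose 2}|V(G)|}$ with the space of functions $\bmm:V(G)\to\bigwedge^2\mathbb{R}^{d+1}$, where $\bmm(w)$ is the block of $\bmm$ at the vertex $w$; with the permutation-matrix convention $P_V(\gamma)=[\delta_{i,\theta(\gamma)(j)}]_{i,j}$ fixed in the text, the $w$-block of $\bigl(\hat{\tau}^{(2)}(\gamma)\otimes P_V(\gamma)\bigr)\bmm$ is $\hat{\tau}^{(2)}(\gamma)\,\bmm(\gamma^{-1}w)$. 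Likewise, for a fixed reference orientation $u\to v$ of an edge $e=\{u,v\}$, the definition of $R(G,\bb)$ gives $(R(G,\bb)\bmm)_e=\langle \bmm(u)-\bmm(v),\bb(e)\rangle$; I would first record that this row does not depend on the chosen orientation, since reversing $u\leftrightarrow v$ negates both $\bb(e)=\hat{p}_{e,u}\wedge\hat{p}_{e,v}$ and the placement of $\pm\bb(e)$ in the row, which is exactly why $\bb$ may be viewed as valued in $Gr(2,d+1)/\{\pm1\}$. This well-definedness is what makes $P_E$ act on an honest coordinate space.

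The computation then proceeds in three short steps. First, the $e$-coordinate of the left-hand side applied to $\bmm$ is $\langle \hat{\tau}^{(2)}(\gamma)\bigl(\bmm(\gamma^{-1}u)-\bmm(\gamma^{-1}v)\bigr),\,\bb(e)\rangle$. Second, since $\hat{\tau}^{(2)}$ is an orthogonal representation (established in the paragraph preceding the theorem via $(A^{(2)})^{\top}=(A^{(2)})^{-1}$), I move $\hat{\tau}^{(2)}(\gamma)$ onto the other factor, obtaining $\langle \bmm(\gamma^{-1}u)-\bmm(\gamma^{-1}v),\,\hat{\tau}^{(2)}(\gamma^{-1})\bb(e)\rangle$. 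Third, I invoke the symmetry relation (\ref{eq:body_bar_sym}), which gives $\hat{\tau}^{(2)}(\gamma^{-1})\bb(e)=\bb(\gamma^{-1}e)$, where $\gamma^{-1}e=\{\gamma^{-1}u,\gamma^{-1}v\}$ inherits the orientation $\gamma^{-1}u\to\gamma^{-1}v$. Hence the expression equals $(R(G,\bb)\bmm)_{\gamma^{-1}e}$, which by the same permutation-matrix convention used for $P_V$ is precisely the $e$-coordinate of $P_E(\gamma)\,R(G,\bb)\bmm$. As $e$ and $\bmm$ were arbitrary, this establishes the intertwining relation for every $\gamma$.

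I expect the only real subtlety to be the orientation/sign bookkeeping: one must check that the edge-rows of $R(G,\bb)$ are well-defined vectors and that the orientation of $\gamma^{-1}e$ produced by (\ref{eq:body_bar_sym}) is consistent with the one used to read off the $(\gamma^{-1}e)$-coordinate of $R(G,\bb)\bmm$ — both follow at once from $\hat{p}_{e,u}\wedge\hat{p}_{e,v}=-\hat{p}_{e,v}\wedge\hat{p}_{e,u}$. Everything else is formal. Note that freeness of $\theta$ on $V(G)$ plays no role here; it is needed only later, for the quotient-gain-graph description. Alternatively, one could perform the identical verification block-by-block directly on the matrices, comparing the $(e,w)$-blocks of $R(G,\bb)\,\bigl(\hat{\tau}^{(2)}(\gamma)\otimes P_V(\gamma)\bigr)$ and of $P_E(\gamma)\,R(G,\bb)$; this is the route taken in the proof of \cite[Theorem~3.1]{schtan}, which the present statement parallels, with $\bigwedge^2\mathbb{R}^{d+1}$ and $\hat{\tau}^{(2)}$ in place of $\mathbb{R}^d$ and $\tau$.
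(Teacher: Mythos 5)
Your proof is correct and is exactly the computation the paper has in mind: the paper omits the proof as ``identical'' to that of \cite[Theorem~3.1]{schtan}, and your coordinate-wise verification (move $\hat{\tau}^{(2)}(\gamma)$ across the inner product by orthogonality, then apply the covariance relation (\ref{eq:body_bar_sym}) to land on the $\gamma^{-1}e$ row) is that same argument, with the orientation bookkeeping and the non-role of freeness correctly noted.
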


It follows from Theorem~\ref{thm:body_block} and Schur's lemma that there are non-singular matrices $S$ and $T$ such that $T^{\top} R(G,\bb) S$ is block-diagonalized as
\begin{equation}
\label{rigblocks}
T^{\top}R(G,\bb)S:=\widetilde{R}(G,\bb)
=\left(\begin{array}{ccc}\widetilde{R}_{0}(G,\bb) & & \mathbf{0}\\ & \ddots & \\\mathbf{0} &  &
\widetilde{R}_{r}(G,\bb) \end{array}\right)\textrm{,}
\end{equation}
where the submatrix block $\widetilde{R}_i(G,\bb)$ corresponds to the irreducible representation $\rho_i$ of $\Gamma$.


\subsection{Block-diagonalization of the rigidity matrix for body-bar frameworks with Abelian symmetry}
\label{bbblock}
In this subsection we shall derive explicit entries of each block in the block-diagonalized form of the rigidity matrix. 
The corresponding work for bar-joint frameworks was done in our previous paper~\cite{schtan},
and here we just confirm that the same technique can be applied.

Throughout the subsequent discussion,  
$\Gamma$ is assumed to be an Abelian group of the form
$\mathbb{Z}/k_1\mathbb{Z}\times \dots \times \mathbb{Z}/k_l\mathbb{Z}$
for some positive integers $k_1,\dots, k_l$.
Thus, we may denote each element of $\Gamma$ by ${\bm i}=(i_1,\dots,i_l)$, where
 $0\leq i_1\leq k_1,\dots, 0\leq i_l\leq k_l$, and regard $\Gamma$ as an additive group.

Let $k=|\Gamma|=k_1 k_2 \dots k_l$.
It is an elementary fact from group representation theory that
$\Gamma$ has $k$ non-equivalent irreducible representations
which are denoted by  $\{\rho_{\bm j}\colon {\bm j}\in \Gamma\}$.
Specifically, for each ${\bm j}\in \Gamma$, $\rho_{\bm j}$ is defined by
\begin{align} \label{eq:abelian_rho}
\rho_{{\bm j}}:\Gamma&\rightarrow \mathbb{C}/\{0\}  \nonumber\\
{\bm i}&\mapsto \omega_1^{i_1j_1}\cdot\omega_2^{i_2j_2} \cdot \ldots \cdot\omega_l^{i_lj_l} ,
\end{align}
where $\omega_t=e^{\frac{2\pi \sqrt{-1}}{k_t}}$, $t=1,\ldots, l$.
To cope with such representations, we extend the underlying field from $\mathbb{R}$ to $\mathbb{C}$ if required.

Let $(G,\bb)$ be a $\Gamma$-symmetric body-bar framework and $(H,\psi)$ be the quotient 
$\Gamma$-gain graph of $G$ with covering map $c:G\rightarrow H$.
Let $K$ be the set of all maps $\bmm$ of the form $\bmm:V(G)\rightarrow \mathbb{R}^{d+1\choose 2}$.
Then the rigidity matrix $R(G,\bb)$ represents a linear map from $K$ to a linear space of dimension $|E(G)|$. 
Also $\hat{\tau}^{(2)}\otimes P_V$ acts on $K$.
An infinitesimal motion $\bmm\in K$ is said to be
{\em $\rho_{\bm j}$-symmetric}  if
\begin{equation}
\label{eq:body_mo_sym}
\bmm({\bm i} v)=\hat{\tau}^{(2)}_{\bm j}({\bm i})\bmm(v) \qquad
\text{ for each ${\bm i} \in \Gamma$ and $v\in V(G)$}
\end{equation}
where $\hat{\tau}^{(2)}_{\bm j}$ denotes the matrix representation of $\Gamma$ defined by
\begin{equation}
\hat{\tau}^{(2)}_{\bm j}:\ {\bm i} \ \mapsto
\ \rho_{\bm j}({\bm i})^{-1} \cdot \hat{\tau}^{(2)}({\bm i}).
\end{equation}
Let $K_{\bm j}=\{\bmm\in K\mid \bmm \text{ satisfies (\ref{eq:body_mo_sym})} \}$.
The following lemma validates the definition of $\rho_{\bm j}$-symmetric infinitesimal motions. 
\begin{lemma}
\label{lem:invariance}
$K_{\bm j}$ is the $\rho_{\bm j}$-invariant subspace of $K$ under the action $\hat{\tau}^{(2)}\otimes P_V$.
In other words, 
\begin{equation}
(\hat{\tau}^{(2)}\otimes P_V)({\bm i})\cdot \bmm= \rho_{\bm j}({\bm i})\cdot \bmm
\end{equation}
for every $\bmm\in K_{\bm j}$ and every ${\bm i}\in \Gamma$.
\end{lemma}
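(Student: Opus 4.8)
The plan is to verify directly from the definitions that membership in $K_{\bm j}$ is equivalent to being a $\rho_{\bm j}$-eigenvector of the action $\hat{\tau}^{(2)}\otimes P_V$, by unwinding how the tensor-product action operates on a map $\bmm\colon V(G)\to\mathbb{R}^{d+1\choose 2}$. First I would fix notation: for $\bmm\in K$ and ${\bm i}\in\Gamma$, the action $(\hat{\tau}^{(2)}\otimes P_V)({\bm i})$ sends $\bmm$ to the map whose value at a vertex $v$ is $\hat{\tau}^{(2)}({\bm i})\,\bmm(\theta({\bm i})^{-1}v) = \hat{\tau}^{(2)}({\bm i})\,\bmm((-{\bm i})v)$, using that $\Gamma$ is written additively and acts freely on $V(G)$. (This is the standard convention under which $P_V(\gamma)$ is the permutation matrix of $\theta(\gamma)$ and the induced action on functions is by $\bmm\mapsto \bmm\circ\theta(\gamma)^{-1}$; one should state it once so the computation is unambiguous.)

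The core computation is then a short chain of equalities. Suppose $\bmm\in K_{\bm j}$, so $\bmm({\bm i}v)=\hat{\tau}^{(2)}_{\bm j}({\bm i})\,\bmm(v)=\rho_{\bm j}({\bm i})^{-1}\hat{\tau}^{(2)}({\bm i})\,\bmm(v)$ for all ${\bm i},v$. Evaluating $(\hat{\tau}^{(2)}\otimes P_V)({\bm i})\cdot\bmm$ at an arbitrary $v$ gives
\[
\hat{\tau}^{(2)}({\bm i})\,\bmm((-{\bm i})v)=\hat{\tau}^{(2)}({\bm i})\,\rho_{\bm j}(-{\bm i})^{-1}\,\hat{\tau}^{(2)}(-{\bm i})\,\bmm(v)=\rho_{\bm j}({\bm i})\,\hat{\tau}^{(2)}({\bm i})\hat{\tau}^{(2)}(-{\bm i})\,\bmm(v)=\rho_{\bm j}({\bm i})\,\bmm(v),
\]
where I use that $\rho_{\bm j}$ is a one-dimensional representation so $\rho_{\bm j}(-{\bm i})^{-1}=\rho_{\bm j}({\bm i})$, and that $\hat{\tau}^{(2)}$ is a genuine representation so $\hat{\tau}^{(2)}({\bm i})\hat{\tau}^{(2)}(-{\bm i})=\hat{\tau}^{(2)}(0)=\id$. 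This shows $K_{\bm j}$ is contained in the $\rho_{\bm j}$-eigenspace. For the reverse inclusion, I would run the same manipulation backwards: if $(\hat{\tau}^{(2)}\otimes P_V)({\bm i})\cdot\bmm=\rho_{\bm j}({\bm i})\,\bmm$ for every ${\bm i}$, then evaluating at $(-{\bm i})v$ — or equivalently reindexing — yields $\hat{\tau}^{(2)}({\bm i})\,\bmm(v)=\rho_{\bm j}({\bm i})\,\bmm({\bm i}v)$, and multiplying by $\rho_{\bm j}({\bm i})^{-1}=\rho_{\bm j}(-{\bm i})$ and rearranging recovers exactly the defining relation \eqref{eq:body_mo_sym}. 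Since $K_{\bm j}$ is clearly a linear subspace (it is cut out by linear equations), this establishes both halves of the claim.

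The argument is essentially bookkeeping, so there is no deep obstacle; the only place care is genuinely needed is pinning down the correct direction of the action — whether $P_V(\gamma)$ acts on $\bmm$ via $\theta(\gamma)$ or $\theta(\gamma)^{-1}$ — because an off-by-an-inverse there would make the eigenvalue come out as $\rho_{\bm j}({\bm i})^{-1}$ rather than $\rho_{\bm j}({\bm i})$, and would force a corresponding change in the definition of $\hat{\tau}^{(2)}_{\bm j}$. I would therefore open the proof by explicitly recording the convention (consistent with \cite{schtan} and with Theorem~\ref{thm:body_block}, i.e. $R(G,\bb)(\hat{\tau}^{(2)}\otimes P_V)=P_E R(G,\bb)$) and then present the three-line display above together with its reverse, remarking that the freeness of $\theta$ on $V(G)$ is what makes the substitution $v\mapsto(-{\bm i})v$ a well-defined bijection of $V(G)$.
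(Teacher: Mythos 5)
Your proof is correct and follows essentially the same route as the paper: the paper's proof is exactly your forward computation, evaluating $(\hat{\tau}^{(2)}\otimes P_V)({\bm i})\cdot\bmm$ at a vertex $v$ as $\hat{\tau}^{(2)}({\bm i})\bmm({\bm i}^{-1}v)$ and then substituting the defining relation of $K_{\bm j}$. Your extra care about the direction of the $P_V$-action (it is indeed $\bmm\mapsto\bmm\circ\theta(\gamma)^{-1}$, as follows from $P_V(\gamma)=[\delta_{i,\theta(\gamma)(j)}]_{i,j}$) and your explicit reverse inclusion are sound additions but not a different argument.
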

\begin{proof}
For each $v\in V(G)$, 
$((\hat{\tau}^{(2)}\otimes P_V)({\bm i})\cdot \bmm)(v)=\hat{\tau}^{(2)}({\bm i})\cdot \bmm({\bm i}^{-1}\cdot v)=\hat{\tau}^{(2)}({\bm i})\cdot \hat{\tau}_{\bm j}^{(2)}({\bm i})^{-1}\cdot \bmm(v)=\rho_{\bm j}({\bm i})\cdot \bmm(v)$. 
\end{proof}

Now let us consider how to compute the dimension of the set of $\rho_{\bm j}$-symmetric infinitesimal motions.
Recall that for a body-bar framework $(G,\bb)$,
a map $\bmm\in K$ is an infinitesimal motion if and only if
\begin{equation}
\label{eq:body_inf}
 \langle \bb(e), \bmm(u)-\bmm(v)\rangle=0 \qquad \text{for all } e=\{u,v\}\in E(G).
\end{equation}
This system of linear equations for $\bmm$ is redundant
if $\bmm$ is restricted to be $\rho_{\bm j}$-symmetric.
Since  the edge orbit associated with $\te\in E(H)$ is
 $c^{-1}(\te)=\{\{\gamma u,\gamma \psi_{\te} v\}|\,\gamma\in \Gamma\}$,
 (\ref{eq:body_inf}) can be written as
\begin{equation}
\label{eq:body_inf2_1}
\langle \bb(\{\gamma u, \gamma \psi_{\te} v\}), \bmm(\gamma u)-\bmm(\gamma \psi_{\te} v)\rangle=0
\qquad (\gamma\in \Gamma)
\end{equation}
for each edge orbit.
By (\ref{eq:body_bar_sym}) and (\ref{eq:body_mo_sym}),
(\ref{eq:body_inf2_1}) becomes
\begin{equation}
\label{eq:body_inf2}
\langle \hat{\tau}^{(2)}(\gamma)\bb(\{u,\psi_{\te} v\}),
\hat{\tau}^{(2)}_{\bm j}(\gamma)
(\bmm(u)-\bmm(\psi_{\te} v))\rangle=0 \qquad (\gamma\in \Gamma).
\end{equation}
These $k$ equations are equivalent to the single equation
\begin{equation}
\label{eq:body_j_inf}
\langle \bb(\{u,\psi_{\te} v\}), \bmm(u)-\hat{\tau}^{(2)}_{\bm j}(\psi_{\te}) \bmm(v)\rangle=0
\end{equation}
for each edge orbit.

This implies that the analysis can be done on the quotient $\Gamma$-gain graph $(H,\psi)$.
To see this, let us define
the motion $\tilde{\bmm}(\tv)$ of a vertex $\tv\in V(H)$
to be the motion $\bmm(v)$ of the representative vertex (body) $v$ of the vertex orbit $c^{-1}(\tv)$.
Also, for a bar-configuration $\bb$ of the form (\ref{eq:bar_conf}),
let $\tilde{\bb}:E(H)\rightarrow Gr(2,d+1)$ be given by
\begin{equation}
\label{eq:bar_quotient}
\tilde{\bb}(\tilde{e})=\bb(\{u,\psi_{\te} v\})=\hat{p}_{e,u}\wedge \hat{p}_{e,\psi_{\te} v}
\qquad (\tilde{e}\in E(H))
\end{equation}
for each $\tilde{e}\in E(H)$,
where $e, u, v$ denote the representative edge and vertices in the corresponding orbits $c^{-1}(\tilde{e}), c^{-1}(\tilde{v}), c^{-1}(\tilde{u})$.

Then, for a $\Gamma$-gain graph $(H,\psi)$ and $\tilde{\bb}:E(H)\rightarrow Gr(2,d+1)$,
a map $\tilde{\bmm}:V(H)\rightarrow \mathbb{R}^d$ is said to be a
$\rho_{\bm j}$-symmetric motion of $(H,\psi,\tilde{\bb})$ if
\begin{equation}
\label{eq:body_j_inf_quot}
\langle \tilde{\bb}(\te),
\tilde{\bmm}(\tu)-\hat{\tau}^{(2)}_{\bm j}(\psi_{\te}) \tilde{\bmm}(\tv)\rangle=0 \qquad \text{for all } \te=(\tu,\tv)\in E(H).
\end{equation}
We define the {\em $\rho_{\bm j}$-orbit rigidity matrix},
denoted by $O_{\bm j}(H,\psi,\tilde{\bb})$,  to be the  matrix of size $|E(H)|\times {d+1\choose 2}|V(H)|$
associated with the system (\ref{eq:body_j_inf_quot}),
in which each vertex has the corresponding ${d+1\choose 2}$ columns,
and the row corresponding to
$\te=(\tu,\tv)\in E(H)$ has the form
\[\begin{array}{ccccc}
       & \overbrace{\hspace{10mm}}^{\tu} & & \overbrace{\hspace{25mm}}^{\tv} & \\
      0\dots0 & \tilde{\bb}(\te) & 0\dots0 & -(\hat{\tau}^{(2)}_{\bm j}(\psi_{\te}))^{-1}\tilde{\bb}(\te) & 0\dots0 \\
    \end{array}\]
where each vector is assumed to be transposed. If $\te$ is a loop at $\tv$, then the entries of $\tv$
become the sum of the two entries:
\[\begin{array}{ccccc}
        & \overbrace{\hspace{40mm}}^{\tv} & \\
      0\dots0 & (I_{d+1\choose 2}- (\hat{\tau}^{(2)}_{\bm j}(\psi_{\te}))^{-1}) \tilde{\bb}(\te) & 0\dots0  \\
    \end{array}\]
The following proposition asserts that one can reduce the problem of computing the rank of each block in the block-diagonalization to the computation of the rank of $O_{\bm j}(H,\psi,\tilde{\bb})$.
\begin{proposition}
Let $\Gamma$ be an Abelian group, $(G,\bb)$ be a $\Gamma$-symmetric framework in $\mathbb{R}^d$,
and $(H,\psi)$ be the quotient $\Gamma$-gain graph of $G$.
Then, for each ${\bm j}\in \Gamma$
\[
 {\rm rank}\ \widetilde{R}_{\bm j}(G,\bb)={\rm rank}\ O_{\bm j}(H,\psi,\tilde{\bb}).
\]
\end{proposition}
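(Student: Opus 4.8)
The plan is to show that the $\rho_{\bm j}$-symmetric infinitesimal motions of $(G,\bb)$ are in linear bijection with the $\rho_{\bm j}$-symmetric motions of the quotient gain framework $(H,\psi,\tilde{\bb})$, and then to compare dimensions of solution spaces. First I would recall from the block-diagonalization (\ref{rigblocks}) that the block $\widetilde{R}_{\bm j}(G,\bb)$ is (up to change of basis) the restriction of $R(G,\bb)$ to the isotypic component of $\hat{\tau}^{(2)}\otimes P_V$ associated with $\rho_{\bm j}$; by Lemma~\ref{lem:invariance} this component is exactly $K_{\bm j}$. Hence the kernel of $\widetilde{R}_{\bm j}(G,\bb)$ is isomorphic to the space of $\rho_{\bm j}$-symmetric infinitesimal motions of $(G,\bb)$, i.e.\ those $\bmm\in K_{\bm j}$ satisfying (\ref{eq:body_inf}), and since $\Gamma$ is Abelian all irreducible representations are $1$-dimensional, so $\dim \operatorname{im}\widetilde{R}_{\bm j}(G,\bb)= \dim K_{\bm j} - \dim\ker\widetilde{R}_{\bm j}(G,\bb)$, giving ${\rm rank}\,\widetilde{R}_{\bm j}(G,\bb)=\dim K_{\bm j} - (\text{number of $\rho_{\bm j}$-symmetric motions})$. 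Note $\dim K_{\bm j}= {d+1\choose 2}|V(H)|$, since a $\rho_{\bm j}$-symmetric motion is freely determined by its values on one representative body per vertex orbit (this uses that $\theta$ is free on $V(G)$).

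Next I would carry out the reduction of the defining linear system. Starting from (\ref{eq:body_inf}) written orbit-by-orbit as (\ref{eq:body_inf2_1}), I substitute the symmetry relations (\ref{eq:body_bar_sym}) and (\ref{eq:body_mo_sym}) to obtain (\ref{eq:body_inf2}), and then use that $\hat{\tau}^{(2)}$ is orthogonal (established earlier in Section~\ref{subsec:body_symmetric}) together with the fact that $\hat{\tau}^{(2)}_{\bm j}(\gamma)$ differs from $\hat{\tau}^{(2)}(\gamma)$ only by the scalar $\rho_{\bm j}(\gamma)^{-1}$, to see that all $k$ equations indexed by $\gamma\in\Gamma$ collapse to the single equation (\ref{eq:body_j_inf}) for each edge orbit. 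Thus $\bmm\in K_{\bm j}$ is an infinitesimal motion of $(G,\bb)$ if and only if its restriction $\tilde{\bmm}$ to representatives satisfies (\ref{eq:body_j_inf_quot}), which is precisely the linear system defining $O_{\bm j}(H,\psi,\tilde{\bb})$. Consequently $\dim\ker O_{\bm j}(H,\psi,\tilde{\bb})$ equals the number of $\rho_{\bm j}$-symmetric infinitesimal motions of $(G,\bb)$, and since $O_{\bm j}(H,\psi,\tilde{\bb})$ also has ${d+1\choose 2}|V(H)|$ columns, we get ${\rm rank}\,O_{\bm j}(H,\psi,\tilde{\bb})={d+1\choose 2}|V(H)| - (\text{number of $\rho_{\bm j}$-symmetric motions})$, which matches the expression for ${\rm rank}\,\widetilde{R}_{\bm j}(G,\bb)$ above.

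The remaining point is a careful bookkeeping check that the loop case is handled correctly: when $\te$ is a loop at $\tv$, the orbit $c^{-1}(\te)$ is an edge orbit on which $\Gamma$ may or may not act freely, and the two contributions $\bmm(u)$ and $-\hat{\tau}^{(2)}_{\bm j}(\psi_{\te})\bmm(v)$ in (\ref{eq:body_j_inf}) land in the same block of columns, producing the coefficient $(I_{{d+1\choose 2}}-(\hat{\tau}^{(2)}_{\bm j}(\psi_{\te}))^{-1})\tilde{\bb}(\te)$; I would note that if $\te\in L$ (the action is not free on that edge orbit) then the single fiber edge still yields exactly this same row, so the matrix $O_{\bm j}(H,\psi,\tilde{\bb})$ is the correct one in all cases. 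I expect the main obstacle to be the precise identification of $\ker\widetilde{R}_{\bm j}(G,\bb)$ with the $\rho_{\bm j}$-symmetric infinitesimal motions: one must verify that the change-of-basis matrices $S,T$ in (\ref{rigblocks}) can be (and are) chosen compatibly with the isotypic decomposition of $K$, so that the $\bm j$-block genuinely computes the rank of $R(G,\bb)$ restricted to $K_{\bm j}$, and that no cross-terms between distinct isotypic components survive — this is where Schur's lemma and the intertwining property of Theorem~\ref{thm:body_block} do the essential work, exactly as in the corresponding argument of \cite{schtan}.
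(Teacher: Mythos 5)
Your proposal is correct and follows essentially the same route as the paper's (sketched) proof: identify the kernel of the block $\widetilde{R}_{\bm j}(G,\bb)$ with the space $K_{\bm j}$ of $\rho_{\bm j}$-symmetric infinitesimal motions via $K=\bigoplus_{\bm j}K_{\bm j}$ and Lemma~\ref{lem:invariance}, collapse the $|\Gamma|$ equations per edge orbit to the single equation (\ref{eq:body_j_inf}) using orthogonality of $\hat{\tau}^{(2)}$, and conclude by matching kernel dimensions against the common column count $\binom{d+1}{2}|V(H)|$. The paper relegates the details to \cite{schtan}; your write-up simply makes those steps (including the rank--nullity bookkeeping and the loop rows) explicit.
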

\begin{proof}
The detailed description for the corresponding proposition for bar-joint frameworks is given in \cite[Section 4]{schtan}. Hence we only give a sketch of the proof.

One can easily check that $K=\bigoplus_{{\bm j}\in \Gamma} K_{\bm j}$. Hence, by Lemma~\ref{lem:invariance}, the kernel of each block $\tilde{R}_{\bm j}(G,\bb)$ is equal to the set of $\rho_{\bm j}$-symmetric infinitesimal motions.
From the above discussion this set has a one-to-one correspondence with the kernel of 
$O_{\bm j} (H,\psi,\tilde{\bb})$.
\end{proof}


\subsection{$\Gamma$-generic Frameworks}
For a discrete point group $\Gamma\subseteq O(\mathbb{R}^d)$, 
let $\mathbb{Q}_{\Gamma}$ be the field generated by $\mathbb{Q}$ and by the entries of the matrices in $\Gamma$.

In this subsection we shall give a formal definition of {\em generic} bar-configurations under symmetry.
To this end it should be noted that for $\tilde{\bm b}(\te)$, defined in (\ref{eq:bar_quotient}),
there exists a geometric relation between $\hat{p}_{e,v}$ and $\hat{p}_{e,\psi_{\te}v}$
if $\Gamma$ does not  act freely on the edge orbit corresponding to $\te$.
To see this,
let us consider a $\Gamma$-symmetric  body-bar framework $(G,\bb)$
for which the underlying action $\theta$ is not free on $E(G)$.
Recall that for a quotient gain graph $(H,\psi)$,
$L$ denotes the set of loops in $E(H)$ corresponding
to the edge orbits of $G$ on which $\Gamma$ does not act freely (cf.~Section~\ref{subsec:multigraphs}).
If we denote the representative edge of $\te\in L$ by $e=\{v, \psi_{\te}v\}\in E(G)$, then
we have $\theta(\psi_{\te})e=e$.
Together with (\ref{eq:body_bar_sym0}), this  implies that
$$\hat{\tau}(\psi_{\te})\hat{p}_{e,v}=\hat{p}_{\theta(\psi_{\te})e, \theta(\psi_{\te})v}
=\hat{p}_{e,\theta(\psi_{\te})v}.$$
Thus, for each $\te\in L$, $\tilde{\bb}(\te)$ is of the form
\begin{equation}
\label{eq:not_free_loop}
\tilde{\bb}(\te)=\hat{p}\wedge \hat{\tau}(\psi_{\te})\hat{p}
\end{equation}
for some $\hat{p}\in \mathbb{R}^{d+1}\setminus \{0\}$.
(In contrast, for $\te\in E(H)\setminus L$, $\tilde{\bb}(\te)$ has the form
$\tilde{\bb}(\te)=\hat{p} \wedge \hat{q}$,
where $\hat{p}$ and $\hat{q}$ are \emph{any} two points in $\mathbb{R}^{d+1}\setminus \{0\}$.)

Thus, for a discrete point group $\Gamma$,
a $\Gamma$-symmetric body-bar framework $(G,\bb)$ is said to be {\em $\Gamma$-generic}
if there is a set
$\{\hat{p}_{\te}, \hat{q}_{\te}\mid \te\in E(H)\setminus L\}\cup \{\hat{p}_{\te}\mid \te\in L\}$
of points in $\mathbb{R}^{d+1}$
such that the set of coordinates is algebraically independent over $\mathbb{Q}_{\Gamma}$ and
$\tilde{\bb}$ is of the form
\[
 \tilde{\bb}(\te)=\begin{cases}
\hat{p}_{\te}\wedge \hat{q}_{\te} & \text{if $\te\in E(H)\setminus L$} \\
\hat{p}_{\te}\wedge \hat{\tau}(\psi_{\te})\hat{p}_{\te} & \text{if $\te\in L$}
\end{cases}\qquad (\te\in E(H)).
\]

A loop $\tilde{e}$ is called a {\em zero loop} in $O_{\bm j}(H,\psi,\tilde{\bb})$
if the row corresponding to $\te$ is a zero vector in $O_{\bm j}(H,\psi,\tilde{\bb})$.
Due to the above geometric restriction, a loop $\te$ of $L$ may be a zero loop even if $(G,\bb)$ is $\Gamma$-generic.
\begin{proposition}
\label{prop:zero_loop2}
Let $\Gamma$ be an Abelian group, $\tau:\Gamma\rightarrow O(\mathbb{R}^d)$ be a faithful representation, and
$(G,\bb)$ be a $\Gamma$-symmetric body-bar framework.
Then a loop $\te$ in $L$ is a zero loop in $O_{\bm j}(H,\psi,\tilde{\bb})$ if and only if
$\rho_{\bm j}(\psi_{\te})=-1$.
\end{proposition}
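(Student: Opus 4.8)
The plan is to analyze directly when the row of $O_{\bm j}(H,\psi,\tilde{\bb})$ corresponding to a loop $\te\in L$ vanishes. Since $\te$ is a loop at some vertex $\tv$, its row is the single block $(I_{{d+1\choose 2}}-(\hat{\tau}^{(2)}_{\bm j}(\psi_{\te}))^{-1})\tilde{\bb}(\te)$, so $\te$ is a zero loop if and only if $\tilde{\bb}(\te)$ lies in the kernel of $I_{{d+1\choose 2}}-(\hat{\tau}^{(2)}_{\bm j}(\psi_{\te}))^{-1}$, equivalently in the eigenspace of $\hat{\tau}^{(2)}_{\bm j}(\psi_{\te})$ for the eigenvalue $1$. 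Recalling $\hat{\tau}^{(2)}_{\bm j}(\psi_{\te})=\rho_{\bm j}(\psi_{\te})^{-1}\hat{\tau}^{(2)}(\psi_{\te})$, and writing $\gamma=\psi_{\te}$, this says that $\tilde{\bb}(\te)$ is an eigenvector of $\hat{\tau}^{(2)}(\gamma)$ with eigenvalue $\rho_{\bm j}(\gamma)$. By $\Gamma$-genericity, for $\te\in L$ we have $\tilde{\bb}(\te)=\hat{p}\wedge\hat{\tau}(\gamma)\hat{p}$ with the coordinates of $\hat{p}$ algebraically independent over $\mathbb{Q}_\Gamma$.

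The key computation is therefore to determine the eigenvalue of $\hat{\tau}^{(2)}(\gamma)$ on the bivector $\hat{p}\wedge\hat{\tau}(\gamma)\hat{p}$, for \emph{generic} $\hat p$. First I would record that, since $\te\in L$, we have $\psi_{\te}^2=id$, so $\gamma$ is an involution in $\Gamma$; as $\tau$ (hence $\hat\tau$) is faithful and orthogonal, $\hat\tau(\gamma)$ is a nontrivial involutory orthogonal matrix, diagonalizable with eigenvalues $\pm1$. Write $\hat p=\hat p^{+}+\hat p^{-}$ in the corresponding eigenspace decomposition. Then $\hat\tau(\gamma)\hat p=\hat p^{+}-\hat p^{-}$, so
\[
\hat p\wedge\hat\tau(\gamma)\hat p=(\hat p^{+}+\hat p^{-})\wedge(\hat p^{+}-\hat p^{-})=-2\,\hat p^{+}\wedge\hat p^{-}.
\]
Applying $\hat\tau^{(2)}(\gamma)$, which by definition sends $\hat u\wedge\hat w$ to $\hat\tau(\gamma)\hat u\wedge\hat\tau(\gamma)\hat w$, gives $\hat\tau^{(2)}(\gamma)(\hat p^{+}\wedge\hat p^{-})=\hat p^{+}\wedge(-\hat p^{-})=-\hat p^{+}\wedge\hat p^{-}$. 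Hence $\hat\tau^{(2)}(\gamma)\tilde{\bb}(\te)=-\tilde{\bb}(\te)$, provided $\hat p^{+}\wedge\hat p^{-}\neq 0$. Genericity of $\hat p$ guarantees both eigenspaces of $\hat\tau(\gamma)$ are hit (the projections $\hat p^{\pm}$ are nonzero, and moreover $\hat p^+$ and $\hat p^-$ are linearly independent unless one of them vanishes) precisely because $\hat\tau(\gamma)\neq\pm I$ — here one uses that the last coordinate of $\hat\tau(\gamma)$ is fixed to $1$ by the augmented construction, so $+1$ is always an eigenvalue, while faithfulness forces a $-1$ eigenvalue too; a generic $\hat p$ then has nonzero component in each.

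Combining the two displays: $\tilde{\bb}(\te)$ is an eigenvector of $\hat\tau^{(2)}(\gamma)$ with eigenvalue $-1$. Therefore $\tilde{\bb}(\te)$ lies in the $1$-eigenspace of $\hat{\tau}^{(2)}_{\bm j}(\gamma)=\rho_{\bm j}(\gamma)^{-1}\hat\tau^{(2)}(\gamma)$ if and only if $\rho_{\bm j}(\gamma)^{-1}\cdot(-1)=1$, i.e. $\rho_{\bm j}(\gamma)=-1$ (using $\rho_{\bm j}(\gamma)\in\{1,-1\}$ since $\gamma^2=id$, or more simply $\rho_{\bm j}(\gamma)^{-1}=\overline{\rho_{\bm j}(\gamma)}$ and the product equals $-1$). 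This is exactly the claim $\rho_{\bm j}(\psi_{\te})=-1$. I expect the only genuinely delicate point to be the genericity argument that $\hat p^{+}\wedge\hat p^{-}\neq0$: one must argue that algebraic independence of the coordinates of $\hat p$ over $\mathbb{Q}_\Gamma$, together with the fact that the eigenprojections of $\hat\tau(\gamma)$ have entries in $\mathbb{Q}_\Gamma$ and both eigenspaces are nontrivial (from $\dim\geq 1$ for $+1$ by the augmentation and $\geq 1$ for $-1$ by faithfulness), forces $\hat p$ to have nonzero, linearly independent components $\hat p^{+},\hat p^{-}$; this is a standard but necessary specialization argument. Everything else is the short bivector computation above together with unwinding the definition of a zero loop.
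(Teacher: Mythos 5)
Your proof is correct, and while it starts from the same two ingredients as the paper (the explicit form $(I_{d+1\choose 2}-(\hat{\tau}^{(2)}_{\bm j}(\psi_{\te}))^{-1})\tilde{\bb}(\te)$ of a loop row, and the constrained form $\tilde{\bb}(\te)=\hat{p}\wedge\hat{\tau}(\psi_{\te})\hat{p}$ for $\te\in L$), the heart of your argument is organized differently. The paper rewrites the loop row as $\hat{p}\wedge\bigl(\hat{\tau}(\psi_{\te})+\rho_{\bm j}(\psi_{\te}^{-1})\hat{\tau}(\psi_{\te}^{-1})\bigr)\hat{p}$; the forward direction is then immediate, but the converse goes through the matrix identity $A+\omega A^{-1}=(1+\omega)I_d$ and an appeal to an external argument (the proof of Proposition 4.3 in \cite{schtan}) to force $\omega=\pm1$, plus the observation $\omega\neq 1$. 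You instead diagonalize the involution $\hat{\tau}(\psi_{\te})$, write $\hat p=\hat p^{+}+\hat p^{-}$, and compute once and for all that $\tilde{\bb}(\te)=-2\,\hat p^{+}\wedge\hat p^{-}$ is a $(-1)$-eigenvector of $\hat{\tau}^{(2)}(\psi_{\te})$; both directions then drop out simultaneously from the single eigenvalue condition $\hat{\tau}^{(2)}(\psi_{\te})\tilde{\bb}(\te)=\rho_{\bm j}(\psi_{\te})\tilde{\bb}(\te)$, with no outside reference. Your approach buys a self-contained and shorter converse, and it isolates precisely where a non-degeneracy hypothesis enters: the ``only if'' direction needs $\hat p^{+}\wedge\hat p^{-}\neq 0$, i.e., $\tilde{\bb}(\te)\neq 0$. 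You are right to flag this — the proposition is stated for a merely $\Gamma$-symmetric framework, yet the converse fails if $\hat p$ lies in an eigenspace of $\hat{\tau}(\psi_{\te})$; the paper's proof has the same implicit reliance on genericity (its step ``$\hat p\wedge M\hat p=0$ implies $M=cI$'' is only valid for generic $\hat p$), so your explicit treatment via the eigenprojections $\tfrac12(I\pm\hat{\tau}(\psi_{\te}))$ having entries in $\mathbb{Q}_\Gamma$, nontriviality of both eigenspaces (augmentation for $+1$, faithfulness and $\psi_{\te}\neq id$ for $-1$), and algebraic independence of the coordinates of $\hat p$ is a genuine improvement in rigor rather than a gap.
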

\begin{proof} Since  $\te\in L$, $\tilde{\bb}(\te)$ is of the form
$\tilde{\bb}(\te)=\hat{p}\wedge \hat{\tau}(\psi_{\te})\hat{p}$ for some non-zero $\hat{p}\in \mathbb{R}^{d+1}$, by (\ref{eq:not_free_loop}). We have
\begin{equation}
\label{eqn:zl}
\big(I_{d+1\choose 2}- \hat{\tau}^{(2)}_{\bm j}(\psi_{\te}^{-1})\big) \tilde{\bb}(\te)= \hat{p}\wedge \big(\hat{\tau}(\psi_{\te})+ \rho_{\bm j}(\psi_{\te}^{-1}) \hat{\tau}(\psi_{\te}^{-1}) \big)\hat{p}
\end{equation}
by $
\big(I_{d+1\choose 2}- \hat{\tau}^{(2)}_{\bm j}(\psi_{\te}^{-1})\big) \tilde{\bb}(\te)
 = \big(I_{d+1\choose 2}- \rho_{\bm j}(\psi_{\te}^{-1})(\hat{\tau}^{(2)}(\psi_{\te}^{-1})\big) \hat{p}\wedge \hat{\tau}(\psi_{\te})\hat{p}
 = \hat{p}\wedge \hat{\tau}(\psi_{\te})\hat{p} - \rho_{\bm j}(\psi_{\te}^{-1}) \hat{\tau}(\psi_{\te}^{-1})\hat{p}\wedge \hat{p} 
 =  \hat{p}\wedge \big(\hat{\tau}(\psi_{\te})+ \rho_{\bm j}(\psi_{\te}^{-1}) \hat{\tau}(\psi_{\te}^{-1}) \big)\hat{p}.
$
Thus, since $\psi_{\te}^2=id$,
if $\rho_{\bm j}(\psi_{\te})=-1$, then we have $\rho_{\bm j}(\psi_{\te}^{-1})=-1$ and $\tau(\psi_{\te})=\tau(\psi_{\te}^{-1})$, implying that $\te$ is a zero loop, by (\ref{eqn:zl}).

Conversely, let $\rho_{\bm j}(\psi_{\te}^{-1})=\omega$ and $\tau(\psi_{\te})=A$. We show that if $\te$ is a zero loop, then $\omega=-1$. Note that
$$\hat{\tau}(\psi_{\te})+\rho_{\bm j}(\psi_{\te}^{-1})\hat{\tau}(\psi_{\te}^{-1})= \left(\begin{array}{c c } A & 0\\ 0 & 1 \end{array}\right)+\omega \left(\begin{array}{c c } A^{-1} & 0\\ 0 & 1 \end{array}\right)= \left(\begin{array}{c c } A+\omega A^{-1} & 0\\ 0 & 1+\omega\end{array}\right).$$
If $\te$ is a zero loop, (\ref{eqn:zl}) implies that
$$\left(\begin{array}{c c } A+\omega A^{-1} & 0\\ 0 & 1+\omega\end{array}\right)= cI_{d+1} \textrm{ for some } c\in\mathbb{C}.$$
We then have $c=1+\omega$ and $A+\omega A^{-1}=(1+\omega)I_d$. The latter equation implies $\omega=1$ or $\omega=-1$ (see the proof of Proposition 4.3 in \cite{schtan}). Since $\omega\neq 1$ (otherwise $G$ contains a loop), we obtain $\omega=-1$.
\end{proof}



\subsection{Example}\label{sec:exambb}

Consider the $3$-dimensional body-bar framework $(G,\bb)$  depicted in Figure~\ref{fig:bbpic2} (a) which consists of two bodies connected by six bars. Such a structure is also known as a `Stewart platform' in the engineering community. The framework in Figure~\ref{fig:bbpic2} (a) is $\mathcal{C}_s$-symmetric (with respect to $\theta$ and $\tau$), where $\mathcal{C}_s=\{id, s\}$, and the corresponding quotient gain graph $(H,\psi)$ is shown in Figure~\ref{fig:bbpic2} (b). Recall that $\mathcal{C}_s$ has only two non-equivalent irreducible representations $\rho_0$ and $\rho_1$.
Let us construct the $\rho_1$-symmetric (or `anti-symmetric') orbit rigidity matrix $O_{1}(H,\psi,\tilde{\bb})$ of $(G,\bb)$. This matrix describes the `anti-symmetric' infinitesimal rigidity properties of $(G,\bb)$, where, by (\ref{eq:body_mo_sym}), an infinitesimal motion $\bmm$ of $(G,\bb)$ is anti-symmetric if
 \begin{equation}
\bmm(\theta(s)(v))=\hat{\tau}^{(2)}_{1}(s)\bmm(v) \qquad
\text{ for all $v\in V(G)$.}\nonumber
\end{equation}
Suppose that the reflection plane of $s$ is the $x-y$-plane, that is,  $\hat{\tau}(s)= \left(\begin{array}{c c c c} 1 & 0 & 0 & 0\\ 0 & 1 & 0 & 0\\ 0& 0 & -1 & 0\\ 0& 0 & 0&1\end{array}\right)$. Then (using the lexicographical order for the row and column indices of $\hat{\tau}^{(2)}(s)$) we have
$$\hat{\tau}^{(2)}_{1}(s)=\rho_1(s)\cdot \hat{\tau}^{(2)}(s)=(-1)\cdot \left(\begin{array}{c c c c c c} 1 & 0 & 0 & 0 & 0 & 0\\ 0 & -1 & 0 & 0 & 0& 0\\ 0& 0 & 1 & 0& 0 & 0\\ 0& 0 & 0& -1 & 0 & 0 \\ 0& 0 & 0 & 0& 1 & 0\\ 0& 0 & 0& 0 & 0 & -1 \end{array}\right).$$

\vspace{0.2cm}
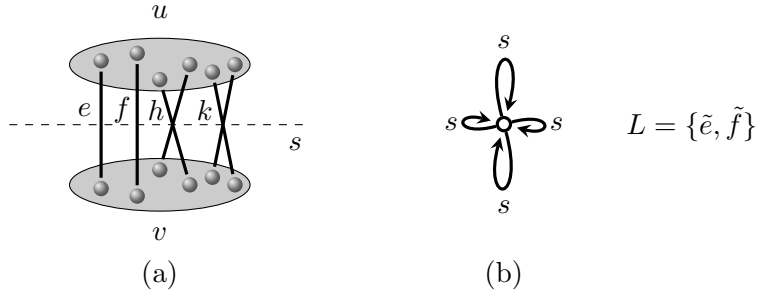
\begin{figure}[htp]
\begin{center}
\begin{tikzpicture}[very thick,scale=1]
\tikzstyle{every node}=[circle, draw=black, fill=white, inner sep=0pt, minimum width=5pt];
\filldraw[fill=black!20!white, draw=black, thin](0,-1.8)ellipse(1.2cm and 0.35cm);
\filldraw[fill=black!20!white, draw=black, thin](0,-0.2)ellipse(1.2cm and 0.35cm);

\node [circle, draw=black!20!white, shade, ball color=black!40!white, inner sep=0pt, minimum width=7pt](p1) at (-0.3,-1.95) {};
\node [circle, draw=black!20!white, shade, ball color=black!40!white, inner sep=0pt, minimum width=7pt](p2) at (-0.78,-1.85) {};
\node [circle, draw=black!20!white, shade, ball color=black!40!white, inner sep=0pt, minimum width=7pt](p3) at (0,-1.6) {};
\node [circle, draw=black!20!white, shade, ball color=black!40!white, inner sep=0pt, minimum width=7pt](p4) at (0.4,-1.8) {};
\node [circle, draw=black!20!white, shade, ball color=black!40!white, inner sep=0pt, minimum width=7pt](p5) at (0.7,-1.7) {};
\node [circle, draw=black!20!white, shade, ball color=black!40!white, inner sep=0pt, minimum width=7pt](p6) at (1,-1.8) {};

\node [circle, draw=black!20!white, shade, ball color=black!40!white, inner sep=0pt, minimum width=7pt](t1) at (-0.3,-0.05) {};
\node [circle, draw=black!20!white, shade, ball color=black!40!white, inner sep=0pt, minimum width=7pt](t2) at (-0.78,-0.15) {};
\node [circle, draw=black!20!white, shade, ball color=black!40!white, inner sep=0pt, minimum width=7pt](t3) at (0,-0.4) {};
\node [circle, draw=black!20!white, shade, ball color=black!40!white, inner sep=0pt, minimum width=7pt](t4) at (0.4,-0.2) {};
\node [circle, draw=black!20!white, shade, ball color=black!40!white, inner sep=0pt, minimum width=7pt](t5) at (0.7,-0.3) {};
\node [circle, draw=black!20!white, shade, ball color=black!40!white, inner sep=0pt, minimum width=7pt](t6) at (1,-0.2) {};

\node [draw=white, fill=white,rectangle] (a) at (-0.05,-0.8) {$h$};
\node [draw=white, fill=white,rectangle] (a) at (0.6,-0.8) {$k$};
\draw(p1)--(t1);
\draw(p2)--(t2);
\draw(p3)--(t4);
\draw(p4)--(t3);
\draw(p5)--(t6);
\draw(p6)--(t5);
\node [draw=white, fill=white] (a) at (0,-3) {(a)};

\node [draw=white, fill=white,rectangle] (a) at (-1,-0.8) {$e$};
\node [draw=white, fill=white,rectangle] (a) at (-0.5,-0.8) {$f$};

\node [draw=white, fill=white,rectangle] (c) at (0,0.5) {$u$};
\node [draw=white, fill=white,rectangle] (c) at (0,-2.45) {$v$};
\draw[dashed, thin](-2,-1)--(2,-1);
\node [draw=white, fill=white,rectangle] (c) at (1.8,-1.25) {$s$};

\end{tikzpicture}
\hspace{1.5cm}
   \begin{tikzpicture}[very thick,scale=1]
\tikzstyle{every node}=[circle, draw=black, fill=white, inner sep=0pt, minimum width=5pt];
       \path (0,-1) node (p1)  {} ;

 \path
(p1) edge [loop above,->, >=stealth,shorten >=2pt,looseness=46] (p1);
 \path
(p1) edge [loop left,->, >=stealth,shorten >=2pt,looseness=26] (p1);
 \path
(p1) edge [loop below,->, >=stealth,shorten >=2pt,looseness=46] (p1);
 \path
(p1) edge [loop right,->, >=stealth,shorten >=2pt,looseness=26] (p1);


\node [draw=white, fill=white,rectangle] (c) at (-0.7,-1) {$s$};
\node [draw=white, fill=white,rectangle] (c) at (0.7,-1) {$s$};
\node [draw=white, fill=white,rectangle] (c) at (0,0.1) {$s$};
\node [draw=white, fill=white,rectangle] (c) at (0,-2.1) {$s$};

\node [draw=white, fill=white,rectangle] (c) at (2.5,-1) {$L=\{\tilde{e},\tilde{f}\}$};

\node [draw=white, fill=white] (a) at (0,-3) {(b)};
       \end{tikzpicture}

\end{center}
\vspace{-0.3cm}
\caption{A  body-bar framework in 3D (also known as a `Stewart platform') with reflection symmetry $\mathcal{C}_s$  (a) and its quotient gain graph (b). }
\label{fig:bbpic2}
\end{figure}

The anti-symmetric orbit rigidity matrix $O_{1}(H,\psi,\tilde{\bb})$ is the following $4\times 6$ matrix:
\begin{displaymath}\bordermatrix{
                &\tilde{u}\cr
                (\tilde{h};s)&  \big(I_6-\hat{\tau}^{(2)}_1(s)^{-1}\big)\tilde{\bb}(\tilde{h})  \cr
                (\tilde{k};s)&  \big(I_6-\hat{\tau}^{(2)}_1(s)^{-1}\big)\tilde{\bb}(\tilde{k})  \cr
 (\te;s)& 0 \ 0 \ 0 \ 0  \ 0 \ 0 \cr
 (\tilde{f};s)& 0 \ 0  \ 0 \ 0 \ 0 \ 0 \cr
}
                \end{displaymath}
where an edge $\tilde{a}$ with label $\gamma$ is denoted by $(\tilde{a};\gamma)$, and $\tilde{\bb}(\tilde{a})=\hat{p}_{a,u}\wedge \hat{p}_{a,\theta(s)(u)}=\hat{p}_{a,u}\wedge \hat{p}_{a,v}$. Note that by Proposition~\ref{prop:zero_loop2}, the loops $\te$ and $\tilde{f}$ in $L$ are zero loops in $O_{1}(H,\psi,\tilde{\bb})$, and hence $O_{1}(H,\psi,\tilde{\bb})$ has only two non-trivial rows.

While generic realizations  of the multigraph $G$ as a body-bar framework (without symmetry) are clearly rigid (in fact, isostatic), as six `independent' bars remove the six relative degrees of freedom between the two bodies, we will show in the next section that $\mathcal{C}_s$-generic realizations  of $G$ as a body-bar framework such as the one in Figure~\ref{fig:bbpic2} (a) are  infinitesimally flexible with an anti-symmetric infinitesimal flex.






\section{Combinatorial characterizations for body-bar frameworks}
\label{bbcombchar}
For a $\Gamma$-symmetric body-bar framework $(G,\bb)$ with respect to $\theta$ and $\tau$,
we say that $(G,\bb)$ is {\em $\Gamma$-regular} if $R(G,\bb)$ has maximal rank
among all $\Gamma$-symmetric body-bar realizations of $G$. Note that a $\Gamma$-generic framework is clearly $\Gamma$-regular.
In this subsection we give a combinatorial characterization of infinitesimally rigid
$\Gamma$-regular  body-bar frameworks for
$\Gamma$ isomorphic to $\mathbb{Z}/2\mathbb{Z}\times \dots \times \mathbb{Z}/2\mathbb{Z}$.
For this we use a result from matroid theory which we explain in Section~\ref{subsec:dowling}. We then give the combinatorial characterization in Section~\ref{subsec:body_chara}.

\subsection{Signed-graphic matroids}
\label{subsec:dowling}
Let $(H,\psi)$ be a $\mathbb{Z}/2\mathbb{Z}$-gain graph, where we treat $\mathbb{Z}/2\mathbb{Z}$ as a multiplicative group $\mathbb{Z}/2\mathbb{Z}=\{-1,1\}$. 
Then a cycle in $H$ is called {\em positive} (resp.~{\em negative}) if the number of  edges with negative gains is even (resp.~odd). In the signed-graphic matroid ${\cal G}(H,\psi)$, an edge set $F\subseteq E(H)$ is independent if and only if each connected component contains at most one cycle, which is negative if it exists.
The signed-graphic matroid is a special case of {\em frame matroids} (or, bias matroid) on gain graphs, see, e.g.,\cite{oxley} for more details. 

It is known that ${\cal G}(H,\psi)$ is representable over $\mathbb{R}$ as follows.
To each $\te=(\tilde{i},\tilde{j})\in E(H)$, we associate a vector  $x_{\te}\in \mathbb{F}^{V(H)}$
defined by
\begin{equation*}
\label{eq:dowling_rep}
x_{\te}(\tv)=\begin{cases}
-\psi(\te) & \text{ if } \tv=\tilde{i} \\
1 & \text{ if } \tv=\tilde{j} \\
0 & \text{ otherwise}
\end{cases}
\end{equation*}
if $\te$ is not a loop, and
\begin{equation*}
\label{eq:dowling_rep_loop}
x_{\te}(\tv)=\begin{cases}
1-\psi(\te) & \text{ if } \tv=\tilde{i} \\
0 & \text{ otherwise}
\end{cases}
\end{equation*}
if $\te$ is a loop attached at $\tilde{i}$.
Then we consider a $|E(H)|\times |V(H)|$ matrix $I(H,\psi)$
consisting of rows $x_{\te}$ for all $\te\in E(H)$.
The matrix is identical to the incidence matrix of $H$, except that the entry becomes $1$ instead of $-1$
if the corresponding edge has label $-1$.
It is known that
$F\subseteq E(H)$ is independent in ${\cal G}(H,\psi)$  if and only if
the set of row vectors of  $I(H,\psi)$ associated with $F$ is linearly independent
(see, e.g., \cite{oxley}).


\subsection{Combinatorial characterizations}
\label{subsec:body_chara}

Suppose that $\Gamma=\mathbb{Z}/2\mathbb{Z}\times \dots \times \mathbb{Z}/2\mathbb{Z}$.
Suppose also that $\Gamma$ acts on $\mathbb{R}^d$ via $\tau:\Gamma\rightarrow O(\mathbb{R}^d)$.
We may assume that $\tau(\gamma)$ is a diagonal matrix with entries in $\{-1,0,1\}$ for each $\gamma\in \Gamma$.
Then
$\hat{\tau}^{(2)}_{\bm g}(\gamma)$ is a diagonal matrix of size ${{d+1\choose 2}\times {d+1\choose 2}}$
in which each diagonal entry is either $1$ or $-1$ for each ${\bm g}\in \Gamma$.
(Note that for the sake of clarity, we deviate from our previous notation here and  use ${\bm g}$ instead of ${\bm j}$.)
Therefore, $\hat{\tau}^{(2)}_{\bm g}$ can be decomposed into ${d+1 \choose 2}$ one-dimensional representations
as follows:
\[
 \hat{\tau}^{(2)}_{\bm g}=\bigoplus_{1\leq i< j\leq d+1} \tau_{\bm g}^{i,j},
\] where
\[
\tau_{\bm g}^{i,j}:\Gamma\rightarrow \mathbb{Z}/2\mathbb{Z}=\{-1,1\}
\]
(where $\mathbb{Z}/2\mathbb{Z}$ is regarded as a multiplicative group).
Then each $\tau_{\bm g}^{i,j}$ induces
a labeling function
\begin{align*}
 \psi_{\bm g}^{i,j}:E(H)\ &\rightarrow \ \mathbb{Z}/2\mathbb{Z}=\{-1,1\} \\
\te\ &\mapsto \ \tau_{\bm g}^{i,j}(\psi(\te)).
\end{align*}
The resulting labeling functions $\psi_{\bm g}^{i,j}\ (1\leq i<j\leq d+1)$ over the quotient graph $H$ are called
{\em the labeling functions induced by $\hat{\tau}^{(2)}_{\bm g}$}.

\begin{theorem}
\label{thm:comb_body}
Let $\Gamma=\mathbb{Z}/2\mathbb{Z}\times \dots \times \mathbb{Z}/2\mathbb{Z}$,
$(G,\bb)$ be a $\Gamma$-generic body-bar framework with respect to a faithful
$\tau:\Gamma\rightarrow O(\mathbb{R}^d)$ and a free $\theta:\Gamma\rightarrow {\rm Aut}(G)$ on $V(G)$, and
$(H,\psi)$ be the corresponding quotient $\Gamma$-gain graph. Further, let
${\bm g}\in \Gamma$ and
$(H_{\bm g},\psi)$ be the $\Gamma$-gain graph obtained from $(H,\psi)$
by removing all loops $\te\in L$ with $\rho_{\bm g}(\psi_{\te})=-1$.
The linear matroid determined by the row vectors in $O_{\bm g}(H,\psi,\tilde{\bb})$
is the matroid union of ${\cal G}(H_{\bm g},\psi_{\bm g}^{i,j})$ over all $1\leq i< j\leq d+1$,
where $\psi_{\bm g}^{i,j}$ are the labeling functions induced by $\hat{\tau}^{(2)}_{\bm g}$,
followed by adjoining all the removed loops of $H$ as loops (in the matroidal sense).

In other words, the following are equivalent:
\begin{description}
\item[(i)] ${\rm rank}\ O_{\bm g}(H,\psi,\tilde{\bb})=|E(H_{\bm g})|$;
\item[(ii)] For any nonempty $F\subseteq E(H_{\bm g})$,
\[
 |F|\leq {d+1\choose 2}|V(F)|-{d+1\choose 2}+\sum_{1\leq i<j\leq d+1}\alpha_{\bm g}^{i,j}(F),
\]
where
\begin{equation}
\label{eq:alpha}
 \alpha_{\bm g}^{i,j}(F)=\begin{cases}
1 & \text{ if $F$ contains a negative cycle in $(H_{\bm g},\psi_{\bm g}^{i,j})$} \\
0 & \text{ otherwise}
\end{cases}
\end{equation}
\item[(iii)] $H_{\bm g}$ can be decomposed into ${d+1\choose 2}$ subgraphs $H_{1,2},\dots, H_{d,d+1}$ such that
for every $1\leq i<j\leq d+1$,
every connected component of $(H_{i,j},\psi_{\bm g}^{i,j})$ contains no cycle or just one cycle, which is negative (with respect to the labeling $\psi_{\bm g}^{i,j}$).
\end{description}
\end{theorem}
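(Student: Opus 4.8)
The plan is to establish the equivalence of (i), (ii), (iii) by first identifying the linear matroid of $O_{\bm g}(H,\psi,\tilde{\bb})$ explicitly, and then invoking classical matroid union together with the representability result for signed-graphic matroids from Section~\ref{subsec:dowling}. The starting observation is that, after the change of coordinates that diagonalizes $\hat{\tau}^{(2)}_{\bm g}$, each row of $O_{\bm g}(H,\psi,\tilde{\bb})$ corresponding to a non-loop edge $\te=(\tu,\tv)$ has the block form with $\tilde{\bb}(\te)$ in the $\tu$-columns and $-(\hat{\tau}^{(2)}_{\bm g}(\psi_{\te}))^{-1}\tilde{\bb}(\te)$ in the $\tv$-columns; since $\hat{\tau}^{(2)}_{\bm g}(\psi_{\te})=\bigoplus_{i<j}\tau_{\bm g}^{i,j}(\psi_{\te})$ is diagonal with $\pm 1$ entries, the ${d+1\choose 2}$ coordinate slots of $\mathbb{R}^{d+1\choose 2}$ decouple. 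First I would make this decoupling precise: grouping columns by the coordinate index $(i,j)$ rather than by vertex, the matrix $O_{\bm g}(H,\psi,\tilde{\bb})$ becomes (up to column permutation) a ``weighted'' sum over $(i,j)$ of matrices, where the $(i,j)$-slab looks like a scaled copy of the matrix $I(H_{\bm g},\psi^{i,j}_{\bm g})$ from Section~\ref{subsec:dowling} — the entry in the $\tu$-slot is $\tilde{\bb}(\te)_{i,j}$ and in the $\tv$-slot is $-\tau^{i,j}_{\bm g}(\psi_{\te})\tilde{\bb}(\te)_{i,j} = -\psi^{i,j}_{\bm g}(\te)\tilde{\bb}(\te)_{i,j}$, which is exactly $\tilde{\bb}(\te)_{i,j}$ times the row $x_{\te}$ of the signed incidence matrix $I(H_{\bm g},\psi^{i,j}_{\bm g})$. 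The loop case is handled identically using the loop formula $(I_{d+1\choose 2}-(\hat{\tau}^{(2)}_{\bm g}(\psi_{\te}))^{-1})\tilde{\bb}(\te)$, whose $(i,j)$-entry is $(1-\psi^{i,j}_{\bm g}(\te))\tilde{\bb}(\te)_{i,j}$, matching the loop row of $I(H_{\bm g},\psi^{i,j}_{\bm g})$; loops $\te\in L$ with $\rho_{\bm g}(\psi_{\te})=-1$ become zero rows by Proposition~\ref{prop:zero_loop2} and so, once adjoined at the end, are loops in the matroidal sense. Thus each row of $O_{\bm g}$ is the ``diagonal concatenation'' of the rows $\tilde{\bb}(\te)_{i,j}\cdot x^{(i,j)}_{\te}$ as $(i,j)$ ranges over the ${d+1\choose 2}$ pairs.

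Next, the key genericity argument: because $(G,\bb)$ is $\Gamma$-generic, the coordinates $\{\hat{p}_{\te},\hat{q}_{\te}\}$ (and the single vectors $\hat{p}_{\te}$ for $\te\in L$) are algebraically independent over $\mathbb{Q}_\Gamma$, hence the nonzero scalars $\tilde{\bb}(\te)_{i,j}$ are ``as independent as possible.'' I would argue that the rank of $O_{\bm g}$ at a $\Gamma$-generic configuration equals the maximum, over all assignments of each edge $\te$ to one of the ${d+1\choose 2}$ coordinate slots $(i,j)$ (i.e.\ over all ways of writing $E(H_{\bm g})=\bigsqcup_{i<j}F_{i,j}$), of $\sum_{i<j}\rank(\text{rows }x^{(i,j)}_{\te}\text{ for }\te\in F_{i,j}\text{ in }I(H_{\bm g},\psi^{i,j}_{\bm g}))$. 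This is the standard ``rank of a matrix whose rows split into independent coordinate blocks with generic scalar multipliers equals the matroid union rank'' phenomenon — the $\tilde{\bb}(\te)_{i,j}$ play the role of generic weights so that the rank of $O_{\bm g}$ equals the rank of the matroid union $\bigvee_{i<j}{\cal G}(H_{\bm g},\psi^{i,j}_{\bm g})$ (each copy living on the common ground set $E(H_{\bm g})$). Here I would cite the representability statement from Section~\ref{subsec:dowling} to identify ``$\{x^{(i,j)}_{\te}:\te\in F\}$ linearly independent'' with ``$F$ independent in ${\cal G}(H_{\bm g},\psi^{i,j}_{\bm g})$,'' and I would cite (or re-derive along the lines of \cite{ww,Tay84}) the general principle that a block-diagonal-with-generic-scaling matrix realizes the matroid union of its blocks; Edmonds' matroid union theorem then gives the rank formula $\rank\bigvee_{i<j}{\cal G}(H_{\bm g},\psi^{i,j}_{\bm g})=\min_{F\subseteq E(H_{\bm g})}\bigl(|E(H_{\bm g})\setminus F|+\sum_{i<j}r_{i,j}(F)\bigr)$, where $r_{i,j}$ is the rank function of ${\cal G}(H_{\bm g},\psi^{i,j}_{\bm g})$.

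With the matroid identification in hand, the three equivalences follow by unwinding the rank function of the signed-graphic matroid. For a connected edge set $F$ on vertex set $V(F)$, one has $r_{i,j}(F)=|V(F)|$ if $F$ contains a negative cycle (with respect to $\psi^{i,j}_{\bm g}$) and $|V(F)|-1$ otherwise; summing over components and over $(i,j)$ and using $\alpha^{i,j}_{\bm g}$ from (\ref{eq:alpha}), Edmonds' formula rearranges into: $\rank O_{\bm g}=|E(H_{\bm g})|$ iff for every nonempty $F\subseteq E(H_{\bm g})$ we have $|F|\le {d+1\choose 2}|V(F)|-{d+1\choose 2}+\sum_{i<j}\alpha^{i,j}_{\bm g}(F)$; I would be careful that the count of the subtracted term is ${d+1\choose 2}$ per component of $F$ and that $|V(F)|$ counts only vertices incident to $F$, which matches the statement once one notes the inequality need only be checked for connected $F$ (the general case follows by summing over components). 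This gives (i)$\Leftrightarrow$(ii). Then (i)$\Leftrightarrow$(iii) is precisely the ``base-packing'' form of matroid union: $\rank\bigvee_{i<j}{\cal G}(H_{\bm g},\psi^{i,j}_{\bm g})=|E(H_{\bm g})|$ iff $E(H_{\bm g})$ decomposes into ${d+1\choose 2}$ sets $F_{i,j}$ with each $F_{i,j}$ independent in ${\cal G}(H_{\bm g},\psi^{i,j}_{\bm g})$ — i.e.\ each connected component of $(H_{i,j},\psi^{i,j}_{\bm g})$ has no cycle or a single negative cycle. The main obstacle I anticipate is the genericity/``generic scaling realizes the matroid union'' step: one must show that algebraic independence of the Pl\"ucker coordinates over $\mathbb{Q}_\Gamma$ genuinely forces the maximum rank to be attained and that no unexpected cancellation occurs among the $\tilde{\bb}(\te)_{i,j}$ across different coordinate slots — this is where the $\Gamma$-generic hypothesis (rather than plain generic) and the structure of $\tilde{\bb}(\te)$ for $\te\in L$ (forced to be $\hat p\wedge\hat\tau(\psi_{\te})\hat p$) must be used carefully, in the spirit of the proof of the non-symmetric body-bar theorem in \cite{Tay84,ww}. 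Everything else is bookkeeping with Edmonds' theorem and the rank function of signed-graphic matroids.
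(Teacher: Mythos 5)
Your reduction to the matroid union of the signed-graphic matroids ${\cal G}(H_{\bm g},\psi_{\bm g}^{i,j})$ is the right framework and matches the paper's overall strategy, and two of the three implications are sound as you describe them. The equivalence \textbf{(ii)}$\Leftrightarrow$\textbf{(iii)} is pure Nash--Williams/Edmonds bookkeeping with the rank function $r_{\bm g}^{i,j}(F)=\sum_{X}(|V(X)|-1+\alpha_{\bm g}^{i,j}(X))$, exactly as in the paper. Your route to \textbf{(i)}$\Rightarrow$\textbf{(ii)} --- group the columns of $O_{\bm g}$ by coordinate pair $(i,j)$, observe that the $(i,j)$-slab consists of scalar multiples of rows of $I(H_{\bm g},\psi_{\bm g}^{i,j})$, and use subadditivity of rank over column blocks --- is valid and is in fact a slightly different (and arguably cleaner) argument than the paper's, which instead exhibits, for each $(i,j)$ with $\alpha_{\bm g}^{i,j}(F)=0$, an explicit kernel vector $\tilde{\bmm}_{i,j}$ supported on the bipartition of $V(F)$ induced by the balanced labelling; the two arguments yield the same count.

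The gap is in \textbf{(iii)}$\Rightarrow$\textbf{(i)}, which you correctly flag as ``the main obstacle'' but do not close. The ``generic scalar multipliers realize the matroid union'' principle you invoke would require the ${d+1\choose 2}$ weights $\tilde{\bb}(\te)_{i,j}$ attached to a single edge $\te$ to be algebraically independent, and they are not: they are the Pl\"{u}cker coordinates of a single decomposable $2$-extensor $\hat{p}_{\te}\wedge\hat{q}_{\te}$, so they satisfy the Grassmann--Pl\"{u}cker relations, and for $\te\in L$ they are further constrained to the form $\hat{p}\wedge\hat{\tau}(\psi_{\te})\hat{p}$. Hence no off-the-shelf generic-weights lemma applies, and one must instead produce an explicit $\Gamma$-symmetric witness configuration realizing the given decomposition: set $\tilde{\bb}'(\te)={\bf e}_i\wedge{\bf e}_j$ for $\te\in E(H_{i,j})\setminus L$, which makes $O_{\bm g}$ literally block-diagonal with blocks $I(H_{i,j},\psi_{\bm g}^{i,j})$, and for $\te\in E(H_{i,j})\cap L$ set $\tilde{\bb}'(\te)=({\bf e}_i+{\bf e}_j)\wedge\hat{\tau}(\psi_{\te})({\bf e}_i+{\bf e}_j)$. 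One must then verify --- this is the Claim inside the paper's proof --- that the resulting loop row is a \emph{nonzero} multiple of ${\bf e}_i\wedge{\bf e}_j$, which uses $\rho_{\bm g}(\psi_{\te})=1$ (since $\te$ survives in $H_{\bm g}$) and $\psi_{\bm g}^{i,j}(\te)=-1$ (forced, because $H_{i,j}$ cannot contain a positive loop under \textbf{(iii)}). Row-independence at this special configuration, together with $\Gamma$-regularity of a $\Gamma$-generic $\bb$, then yields \textbf{(i)}. Without this construction --- in particular without checking that a legal $2$-extensor of the constrained form exists for each loop in $L$ and does not degenerate --- the existence half of the theorem is not established.
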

\begin{proof}
We first remark that \textbf{(ii)} and \textbf{(iii)} are equivalent by Nash-Williams' matroid union theorem.
To see this, recall that in the frame matroid ${\cal G}(H_{\bm g},\psi_{\bm g}^{i,j})$,
an edge set $F$ is independent if and only if each connected component of $F$ contains no cycle or just one
cycle, and the cycle is negative if it exists.
Therefore, condition \textbf{(iii)} is nothing but the necessary and sufficient condition for $E(H_{\bm g})$ to be
independent in the union $\bigvee_{1\leq i<j\leq d+1} {\cal G}(H_{\bm g},\psi_{\bm g}^{i,j})$.

Further, it follows from the independence condition of  ${\cal G}(H_{\bm g},\psi_{\bm g}^{i,j})$
that the rank function $r_{\bm g}^{i,j}:E(H_{\bm g})\rightarrow \mathbb{Z}$ of
${\cal G}(H_{\bm g},\psi_{\bm g}^{i,j})$ can be written as
\[
 r_{\bm g}^{i,j}(F)=\sum_{X:\text{ component of } F}(|V(X)|-1+\alpha_{\bm g}^{i,j}(X)) \qquad (F\subseteq E(H_{\bm g})),
\]
where the sum is taken over all connected components $X$ of $F$.
By the matroid union theorem\footnote{We use Nash-Williams' theorem as follows.
Suppose that ${\cal M}_1,\dots, {\cal M}_k$
are matroids on the same ground set $S$ with rank functions $r_1,\dots, r_k$, respectively.
Then Nash-Williams' matroid union theorem says that
the rank function $r:S\rightarrow \mathbb{Z}$ of the union $\bigvee_{1\leq i\leq k} {\cal M}_i$
can be written as $r(X)=\min_{X'\subseteq X}\{|X'|+\sum_{1\leq i\leq k}r_i(X\setminus X')\}$.
Note that $S$ is independent in the union if and only if $|X|\leq r(X)$ for every $X\subseteq S$,
but the latter condition is equivalent to
$|X|\leq \sum_{1\leq i\leq k}r_i(X)$ for every $X\subseteq S$.},
$E(H_{\bm g})$ is independent in $\bigvee_{1\leq i<j\leq d+1} {\cal G}(H_{\bm g},\psi_{\bm g}^{i,j})$
if and only if
\begin{align*}
 |F|&\leq \sum_{1\leq i<j\leq d+1}r_{\bm g}^{i,j}(F) \\
&=\sum_{X:\text{ component of } F}\left\{{d+1\choose 2}|V(X)|-{d+1\choose 2}+\sum_{1\leq i<j\leq d+1}\alpha_{\bm g}^{i,j}(X)\right\}
\end{align*}
for every $F\subseteq E(H_{\bm g})$. It is routine to check that this condition can be simplified to \textbf{(ii)}.

To complete the proof we now prove \textbf{(i)}$\Rightarrow$\textbf{(ii)} and then \textbf{(iii)}$\Rightarrow$\textbf{(i)}.
By Proposition~\ref{prop:zero_loop2},
every loop not in $H_{\bm g}$ is a zero loop in $O_{\bm g}(H,\psi,\tilde{\bb})$.
Thus, \textbf{(i)} is equivalent to
\begin{description}
\item[(i')] $O_{\bm g}(H_{\bm g},\psi,\tilde{\bb})$ is row independent.
\end{description}

For $F\subseteq E(H_{\bm g})$, let $I_F=\{(i,j)\mid 1\leq i<j\leq d+1, \alpha_{\bm g}^{i,j}(F)=0\}$.
To show that \textbf{(i')} implies \textbf{(ii)} we show
\begin{equation}
\label{eq:i_ii}
\dim {\rm ker}\ O_{\bm g}(H[F],\psi,\bb)\geq |I_F|.
\end{equation}
This in turn implies that for the row independence of $O_{\bm g}(H_{\bm g},\psi,\tilde{\bb})$,
we need $|F|\leq {d+1\choose 2}|V(F)|-|I_F|$, that is, condition \textbf{(ii)}.

To see (\ref{eq:i_ii}), for each $(i,j)\in I_F$, we  define
$\tilde{{\bm m}}_{i,j}:V(F)\rightarrow \mathbb{R}^{d+1\choose 2}$ as follows.
Since $F$ contains no negative cycle in $(H_{\bm g},\psi_{\bm g}^{i,j})$,
there is a partition of $V(F)$ into two sets $X^{i,j}, Y^{i,j}$
(one of which may be empty) such that $\psi_{\bm g}^{i,j}(\te)=-1$ if and only if
$\te$ joins a vertex in $X^{i,j}$ with a vertex in $Y^{i,j}$.
(To see this, consider the gain graph obtained from $(H[F],\psi_{\bm g}^{i,j})$ by contracting
every edge having the identity label. Since every cycle in $F$ is positive, the resulting graph
is bipartite, and the resulting two classes of the vertex set indicate the desired
bipartition $\{X^{i,j}, Y^{i,j}\}$ of $V(F)$. )
Define $\tilde{{\bm m}}_{i,j}:V(F)\rightarrow \mathbb{R}^{d+1\choose 2}$ by
\begin{equation*}
\tilde{{\bm m}}_{i,j}(\tv)=\begin{cases}
{\bf e}_i\wedge {\bf e}_j & \text{ if $\tv\in X^{i,j}$} \\
-{\bf e}_i\wedge {\bf e}_j & \text{ if $\tv\in Y^{i,j}$} \\
\end{cases} \qquad (\tv\in V(F))
\end{equation*}
where $\{{\bf e}_1, {\bf e}_2,\dots, {\bf e}_{d+1}\}$ is the standard basis
of $\mathbb{R}^{d+1}$.

From the definition of $\psi_{\bm g}^{i,j}$, for each $\te=(\tu,\tv)\in F$, we have
\[
 \tilde{\bmm}_{i,j}(\tu)-\hat{\tau}_{\bm g}^{(2)}(\psi_{\te})\tilde{\bmm}_{i,j}(\tv)=
\pm({\bf e}_i\wedge {\bf e}_j-(\psi_{\bm g}^{i,j}(\te))^2{\bf e}_i\wedge {\bf e}_j)=0.
\]
Thus,
$\langle \tilde{b}(\te), \tilde{\bmm}_{i,j}(\tu)-\hat{\tau}_{\bm g}^{(2)}(\psi_{\te})\tilde{\bmm}_{i,j}(\tv)\rangle=0$
for every $\te\in F$.
This implies (according to (\ref{eq:body_j_inf_quot})) that
$\tilde{{\bm m}}_{i,j}$ is in the kernel of $O_{\bm g}(H[F],\psi,\tilde{\bb})$.
Since $\{\tilde{\bmm}_{i,j}\mid (i,j)\in I_F\}$ is linearly independent, we verified (\ref{eq:i_ii}).

Finally, let us prove \textbf{(iii)}$\Rightarrow$\textbf{(i')}.
Suppose that $E(H)$ can be decomposed into ${d+1\choose 2}$ subgraphs $\{H_{i,j}\mid 1\leq i<j\leq d+1\}$,
as specified in the statement.

We first consider the case where $L=\emptyset$ (i.e., $\Gamma$  acts freely on $E(G)$).
Based on the decomposition, we define $\tilde{\bb}':E(H_{\bm g})\rightarrow Gr(2,d+1)$ by
\begin{equation}
\label{eq:iii_i1}
\tilde{\bb}'(\te)={\bf e}_i\wedge {\bf e}_j \qquad (\te\in E(H_{i,j})).
\end{equation}
Then observe that by changing the column and the row orderings,
$O_{\bm g}(H_{\bm g},\psi,\tilde{\bb})$ is in the following  block-diagonalized form:
\begin{equation}
\label{eq:iii_i2}
\begin{array}{c|c|c|c|c|}
\multicolumn{1}{c}{} & \multicolumn{1}{c}{(1,2)} & \multicolumn{1}{c}{(1,3)} & \multicolumn{1}{c}{\dots} & \multicolumn{1}{c}{(d,d+1)}
\\ \cline{2-5}
E(H_{1,2}) & I(H_{1,2},\psi_{\bm g}^{1,2}) & \multicolumn{2}{c}{ } &  \\ \cline{2-3}
E(H_{1,3}) &   & I(H_{1,3},\psi_{\bm g}^{1,3}) & \multicolumn{1}{c}{ }  & \hsymb{0} \\ \cline{3-3}
\vdots & \multicolumn{2}{c}{}  & \multicolumn{1}{c}{\ \ \ \ddots\ \ \ } & \\ \cline{5-5}
E(H_{d,d+1}) & \multicolumn{1}{c}{ \hsymb{0} }  & \multicolumn{1}{c}{} & & I(H_{d,d+1},\psi_{\bm g}^{d,d+1}) \\ \cline{2-5}
\end{array}
\end{equation}
where each block $I(H_{i,j},\psi_{\bm g}^{i,j})$ is a matrix representation of
${\cal G}(H_{i,j},\psi_{\bm g}^{i,j})$ (cf.~Section~\ref{subsec:dowling}).
Since  $E(H_{i,j})$ is independent in ${\cal G}(H_{i,j},\psi_{\bm g}^{i,j})$,
$O_{\bm g}(H_{\bm g},\psi,\tilde{\bb}')$ is row independent.

If $L\neq \emptyset$,
we have to be careful, since $\tilde{\bb}'(\te)$ of $\te\in L$ has to be a 2-extensor
of the form $\hat{p}\wedge \hat{\tau}(\psi_{\te})\hat{p}$
for some $\hat{p}\in \mathbb{R}^{d+1}$ by (\ref{eq:not_free_loop}).
We claim the following.
\begin{claim}
Let $\te$ be a loop in $E(H_{i,j})\cap L$ and  let $\hat{p}={\bf e}_i+{\bf e}_j\in \mathbb{R}^{d+1}$.
Then
$\left(I_{d+1\choose 2}-(\hat{\tau}^{(2)}(\psi_{\te}))^{-1}\right)(\hat{p}\wedge \hat{\tau}(\psi_{\te})\hat{p})$
is a scalar multiple of  ${\bf e}_i\wedge {\bf e}_j$.
\end{claim}
\begin{proof}
Since $\te$ is in $H_{\bm g}$, $\rho_{\bm g}(\psi_{\te})\neq -1$ holds, and hence $\rho_{\bm g}(\psi_{\te})=1$.

Also, we must have $\psi_{\bm g}^{i,j}(\te)=-1$, for
otherwise $E(H_{i,j})$ contains a loop with identity label, a contradiction.
Recall that $\hat{\tau}(\psi_{\te})$ is a diagonal matrix with entries in $\{-1,1\}$.
Let $k_i\in \{-1,1\}$ be the value of the $i$-th diagonal entry.
Then observe that $\tau_{\bm g}^{i,j}(\psi_{\te})=k_ik_j$.
Therefore, by $\tau_{\bm g}^{i,j}(\psi_{\te})=\psi_{\bm g}^{i,j}(\te)=-1$, we obtain $k_ik_j=-1$.

Since $\hat{p}\wedge \hat{\tau}(\psi_{\te})\hat{p}=({\bf e}_i+{\bf e}_j)\wedge (k_i{\bf e}_i+k_j{\bf e}_j)=
(k_j-k_i){\bf e}_i\wedge {\bf e}_j$,
we have
$\left(I_{d+1\choose 2}-(\hat{\tau}^{(2)}_{\bm g}(\psi_{\te}))^{-1}\right)(\hat{p}\wedge \hat{\tau}(\psi_{\te})\hat{p})
=\left(I_{d+1\choose 2}-(\hat{\tau}^{(2)}_{\bm g}(\psi_{\te}))^{-1}\right)
\left((k_j-k_i){\bf e}_i\wedge {\bf e}_j\right)
=(1-k_i^{-1}k_j^{-1})(k_j-k_i)({\bf e}_i\wedge {\bf e}_j)$.
By $k_ik_j=-1$, $(1-k_i^{-1}k_j^{-1})(k_i-k_j)$ is nonzero, which implies the statement.
\end{proof}
Following this claim, we  define $\tilde{\bb}':E(H)\rightarrow Gr(2,d+1)$ by
\begin{equation*}
\tilde{\bb}'(\te)=\begin{cases}
{\bf e}_i\wedge {\bf e}_j & \text{if $\te\notin L$} \\
({\bf e}_i+{\bf e}_j)\wedge \hat{\tau}(\psi_{\te})({\bf e}_i+{\bf e}_j) & \text{if $\te\in L$}
\end{cases}
\qquad (\te\in E(H_{i,j})).
\end{equation*}
Then $O_{\bm g}(H,\psi,\tilde{\bb}')$ is block-diagonalized in the form of (\ref{eq:iii_i2}),
and ${\rm rank}\ O_{\bm g}(H_{\bm g},\psi,\tilde{\bb}')=|E(H_{\bm g})|$.
In other words {\bf (i')} holds.
\end{proof}

Note that the dimension of the space of $\rho_{\bm g}$-symmetric trivial infinitesimal motions is equal to
\begin{equation*}
\frac{1}{|\Gamma|}\sum_{\gamma \in \Gamma} {\rm Trace}(\hat{\tau}^{(2)}_{\bm g}(\gamma)).
\end{equation*}


\begin{corollary}
\label{cor:body_bar}
Let $\Gamma=\mathbb{Z}/2\mathbb{Z}\times \dots \times \mathbb{Z}/2\mathbb{Z}$,
 $\tau:\Gamma\rightarrow O(\mathbb{R}^d)$ be a  faithful representation,
$(G,\bb)$ be a $\Gamma$-regular body-bar framework, and
$(H,\psi)$ be the corresponding quotient $\Gamma$-gain graph.
Then the following are equivalent.
\begin{itemize}
\item $(G,\bb)$ is infinitesimally rigid;
\item  for every ${\bm g}\in \Gamma$, $H$ contains a spanning subgraph
$H_{\bm g}$ such that
\begin{description}
\item[(1)] $H_{\bm g}$ contains no zero loop, i.e., a loop $\tilde{e}\in L$ with $\rho_{\bm g}(\psi_{\te})=-1$;
\item[(2)] $|E(H_{\bm g})|={d+1\choose 2}|V(H_{\bm g})|-
\frac{1}{|\Gamma|}\sum_{\gamma \in \Gamma} {\rm Trace}(\hat{\tau}^{(2)}_{\bm g}(\gamma))$;
\item[(3)] for every $F\subseteq E(H_{\bm g})$,  $|F|\leq {d+1\choose 2}|V(F)|-{d+1\choose 2}+\sum_{1\leq i<j\leq d+1}\alpha_{\bm g}^{i,j}(F)$,
where $\alpha_{\bm g}^{i,j}$ is defined as in (\ref{eq:alpha}).
\end{description}
\item for every ${\bm g}$, $H$ contains a subgraph $H_{\bm g}$ satisfying {\rm \textbf{(1)}} and {\rm \textbf{(2)}} that contains ${d+1\choose 2}$ edge-disjoint subgraphs $H_{1,2},\dots, H_{d,d+1}$ such that for every $1\leq i<j\leq d+1$ every connected component of $(H_{i,j},\psi_{\bm g}^{i,j})$ contains no cycle or just one cycle, which is negative. 
\end{itemize}
\end{corollary}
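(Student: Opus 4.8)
The plan is to read the corollary off the block-diagonalisation (\ref{rigblocks}) of $R(G,\bb)$ together with Theorem~\ref{thm:comb_body}. Throughout, write $t_{\bm g}:=\frac{1}{|\Gamma|}\sum_{\gamma\in\Gamma}{\rm Trace}(\hat{\tau}^{(2)}_{\bm g}(\gamma))$; as noted just before the corollary, $t_{\bm g}$ is the dimension of the space of $\rho_{\bm g}$-symmetric trivial infinitesimal motions, equivalently the multiplicity of $\rho_{\bm g}$ in the representation $\hat{\tau}^{(2)}$ carried by the ${d+1\choose 2}$-dimensional space of all trivial motions, so that $\sum_{{\bm g}\in\Gamma}t_{\bm g}={d+1\choose 2}$. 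First I would record three elementary facts: (a) since $\theta$ is free on $V(G)$ we have $|V(G)|=|\Gamma|\cdot|V(H)|$, and each block $\widetilde{R}_{\bm g}(G,\bb)$ -- equivalently $O_{\bm g}(H,\psi,\tilde{\bb})$ -- has exactly ${d+1\choose 2}|V(H)|$ columns; (b) by (\ref{rigblocks}) and the Proposition of Section~\ref{bbblock}, ${\rm rank}\,R(G,\bb)=\sum_{{\bm g}\in\Gamma}{\rm rank}\,\widetilde{R}_{\bm g}(G,\bb)=\sum_{{\bm g}\in\Gamma}{\rm rank}\,O_{\bm g}(H,\psi,\tilde{\bb})$; (c) the bijection of Section~\ref{bbblock} between $\rho_{\bm g}$-symmetric infinitesimal motions and $\ker O_{\bm g}(H,\psi,\tilde{\bb})$ carries the $t_{\bm g}$-dimensional space of $\rho_{\bm g}$-symmetric trivial motions into the kernel, so $\dim\ker O_{\bm g}(H,\psi,\tilde{\bb})\ge t_{\bm g}$ and hence ${\rm rank}\,O_{\bm g}(H,\psi,\tilde{\bb})\le{d+1\choose 2}|V(H)|-t_{\bm g}$ for every ${\bm g}$.

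Summing the bound in (c) gives ${\rm rank}\,R(G,\bb)\le\sum_{{\bm g}\in\Gamma}\big({d+1\choose 2}|V(H)|-t_{\bm g}\big)={d+1\choose 2}(|V(G)|-1)$, with equality if and only if every summand attains its bound. Since infinitesimal rigidity of $(G,\bb)$ is precisely ${\rm rank}\,R(G,\bb)={d+1\choose 2}(|V(G)|-1)$, I conclude that $(G,\bb)$ is infinitesimally rigid if and only if ${\rm rank}\,O_{\bm g}(H,\psi,\tilde{\bb})={d+1\choose 2}|V(H)|-t_{\bm g}$ for every ${\bm g}\in\Gamma$. Because $(G,\bb)$ is $\Gamma$-regular, each block attains its maximal rank, which equals the $\Gamma$-generic value; here I would note that a $\Gamma$-generic realisation makes all blocks simultaneously maximal, and that faithfulness of $\tau$ makes the set of zero loops the same as in the $\Gamma$-generic case (Proposition~\ref{prop:zero_loop2}). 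Hence Theorem~\ref{thm:comb_body} applies and says that the rows of $O_{\bm g}(H,\psi,\tilde{\bb})$ represent the matroid obtained from $\bigvee_{1\le i<j\le d+1}{\cal G}(\overline{H}_{\bm g},\psi_{\bm g}^{i,j})$ -- on the ground set $E(\overline{H}_{\bm g})$, where $\overline{H}_{\bm g}$ denotes $H$ with every zero loop deleted -- by adjoining the deleted loops as matroid loops. Matroid loops do not affect the rank, so the condition becomes: for every ${\bm g}$, the matroid $\bigvee_{1\le i<j\le d+1}{\cal G}(\overline{H}_{\bm g},\psi_{\bm g}^{i,j})$ has rank ${d+1\choose 2}|V(H)|-t_{\bm g}$.

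It then remains to translate this rank statement into the subgraph conditions of the corollary, using only the matroid-combinatorial part of the proof of Theorem~\ref{thm:comb_body}: for any such gain graph, its edge set is independent in $\bigvee_{1\le i<j\le d+1}{\cal G}(\cdot,\psi_{\bm g}^{i,j})$ if and only if condition \textbf{(ii)} of Theorem~\ref{thm:comb_body} holds, if and only if the decomposition described in \textbf{(iii)} exists (this is the ``\textbf{(ii)}$\Leftrightarrow$\textbf{(iii)} by Nash--Williams'' step, applied to the graph itself). For the forward direction I would choose a basis $F$ of $\bigvee_{i<j}{\cal G}(\overline{H}_{\bm g},\psi_{\bm g}^{i,j})$ and set $H_{\bm g}:=(V(H),F)$; then $H_{\bm g}$ contains no zero loop (property~\textbf{(1)}), $|E(H_{\bm g})|=|F|={d+1\choose 2}|V(H_{\bm g})|-t_{\bm g}$ (property~\textbf{(2)}), and, since restriction commutes with matroid union and a signed-graphic matroid restricted to an edge subset is the signed-graphic matroid of the corresponding subgraph, $E(H_{\bm g})$ is independent in $\bigvee_{i<j}{\cal G}(H_{\bm g},\psi_{\bm g}^{i,j})$; the quoted equivalences then yield both property~\textbf{(3)} and the edge-disjoint decomposition $H_{1,2},\dots,H_{d,d+1}$. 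Conversely, a spanning subgraph $H_{\bm g}$ satisfying \textbf{(1)}, \textbf{(2)}, and either \textbf{(3)} or the decomposition is, by the same equivalences, an independent set of $\bigvee_{i<j}{\cal G}(\overline{H}_{\bm g},\psi_{\bm g}^{i,j})$ of size ${d+1\choose 2}|V(H)|-t_{\bm g}$, so that matroid has rank at least this; combined with the upper bound from the first paragraph this forces equality. Putting all of this together shows that the three statements of the corollary are equivalent.

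The main obstacle I anticipate is organisational rather than conceptual. One must carefully separate the symbol $H_{\bm g}$ as it is used in Theorem~\ref{thm:comb_body} (the fixed graph $H$ with all zero loops removed, written $\overline{H}_{\bm g}$ above) from its use in the corollary (a variable spanning subgraph that happens to avoid zero loops), and check that restriction of the relevant unions of signed-graphic matroids behaves correctly when passing between the two. One must also set up the dimension count $\sum_{{\bm g}\in\Gamma}t_{\bm g}={d+1\choose 2}$ and the ``trivial motions lie in the kernel'' bound (c) with enough care that attaining the global rigidity rank is genuinely equivalent to attaining the maximal rank in every block, and must justify, via $\Gamma$-regularity and Proposition~\ref{prop:zero_loop2}, that Theorem~\ref{thm:comb_body} -- stated for $\Gamma$-generic frameworks -- may be invoked here. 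None of this is difficult, but these are the places at which a careless argument would break.
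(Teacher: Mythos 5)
Your argument is correct and is precisely the derivation the paper intends (the corollary is stated without proof, as an immediate consequence of Theorem~\ref{thm:comb_body}, the block-diagonalization, and the remark on the dimension of the $\rho_{\bm g}$-symmetric trivial motions): you sum the per-block rank bounds, use $\Gamma$-regularity to reduce to the $\Gamma$-generic block ranks, and translate via the matroid-union equivalences of the theorem. Your care with the two meanings of $H_{\bm g}$ and with the identity $\sum_{\bm g}t_{\bm g}={d+1\choose 2}$ addresses exactly the points the paper glosses over.
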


As we will see in the following examples, checking condition \textbf{(ii)} of Theorem~\ref{thm:comb_body}
or condition \textbf{(3)} of Corollary~\ref{cor:body_bar} by hand is applicable only for very small graphs
and the characterization in terms of the counting conditions in \textbf{(ii)} or \textbf{(3)} do not provide a polynomial size certificate that a framework is infinitesimally rigid.
Instead, one can use the characterization in terms of graph decompositions given in \textbf{(iii)} to give a polynomial size certificate for an infinitesimally rigid framework.
In general, these conditions can be checked in $O(|V(H)|^{5/2}|E(H)|)$ time by a matroid union algorithm~\cite{cunningham},
where the independence testing in each matroid can be done in $O(|V(H)|)$ time.
Developing a faster algorithm is left as an open problem.

\subsection{Examples}
\label{subsec:body_bar_examples}
Let us illustrate Theorem~\ref{thm:comb_body} and Corollary~\ref{cor:body_bar} via two examples. First, consider the  $\mathcal{C}_s$-generic Stewart platform $(G,\bb)$  from Section~\ref{sec:exambb},
where $\mathcal{C}_s=\{id,s\}$ and $id$ and $s$ are identified with $0$ and $1$, respectively.
Using Corollary~\ref{cor:body_bar}, we show that $(G,\bb)$ is infinitesimally flexible.

From the $\mathcal{C}_s$-gain graph $(H,\psi)$ of $(G,\bb)$,
we first construct the $\mathcal{C}_s$-gain graphs $(H_0,\psi)$ and $(H_1,\psi)$ which are obtained from $(H,\psi)$ by removing the loops $\tilde{e}\in L$ with $\rho_0(\psi_{\tilde{e}})=-1$ and $\rho_1(\psi_{\tilde{e}})=-1$, respectively (as defined in Theorem~\ref{thm:comb_body}). See also Figure~\ref{fig:bbpicthmill}.
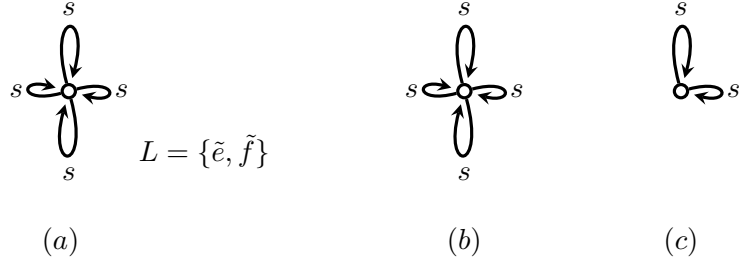
\begin{figure}[htp]
\begin{center}
   \begin{tikzpicture}[very thick,scale=1]
\tikzstyle{every node}=[circle, draw=black, fill=white, inner sep=0pt, minimum width=5pt];
       \path (0,-1) node (p1)  {} ;

 \path
(p1) edge [loop above,->, >=stealth,shorten >=2pt,looseness=46] (p1);
 \path
(p1) edge [loop left,->, >=stealth,shorten >=2pt,looseness=26] (p1);
 \path
(p1) edge [loop below,->, >=stealth,shorten >=2pt,looseness=46] (p1);
 \path
(p1) edge [loop right,->, >=stealth,shorten >=2pt,looseness=26] (p1);

\node [draw=white, fill=white,rectangle] (c) at (-0.7,-1) {$s$};
\node [draw=white, fill=white,rectangle] (c) at (0.7,-1) {$s$};
\node [draw=white, fill=white,rectangle] (c) at (0,0.1) {$s$};
\node [draw=white, fill=white,rectangle] (c) at (0,-2.1) {$s$};

\node [draw=white, fill=white,rectangle] (c) at (1.8,-1.8) {$L=\{\tilde{e},\tilde{f}\}$};

\node [draw=white, fill=white,rectangle] (a) at (0.5,-3) {$(a) \, \qquad \quad$};
       \end{tikzpicture}
     \hspace{1.5cm}
         \begin{tikzpicture}[very thick,scale=1]
\tikzstyle{every node}=[circle, draw=black, fill=white, inner sep=0pt, minimum width=5pt];
       \path (0,-1) node (p1)  {} ;

 \path
(p1) edge [loop above,->, >=stealth,shorten >=2pt,looseness=46] (p1);
 \path
(p1) edge [loop left,->, >=stealth,shorten >=2pt,looseness=26] (p1);
 \path
(p1) edge [loop below,->, >=stealth,shorten >=2pt,looseness=46] (p1);
 \path
(p1) edge [loop right,->, >=stealth,shorten >=2pt,looseness=26] (p1);

\node [draw=white, fill=white,rectangle] (c) at (-0.7,-1) {$s$};
\node [draw=white, fill=white,rectangle] (c) at (0.7,-1) {$s$};
\node [draw=white, fill=white,rectangle] (c) at (0,0.1) {$s$};
\node [draw=white, fill=white,rectangle] (c) at (0,-2.1) {$s$};

\node [draw=white, fill=white,rectangle] (a) at (0,-3) {$(b)$};
       \end{tikzpicture}
 \hspace{1.5cm}
         \begin{tikzpicture}[very thick,scale=1]
\tikzstyle{every node}=[circle, draw=black, fill=white, inner sep=0pt, minimum width=5pt];
       \path (0,-1) node (p1)  {} ;

 \path
(p1) edge [loop above,->, >=stealth,shorten >=2pt,looseness=46] (p1);
 \path
(p1) edge [loop right,->, >=stealth,shorten >=2pt,looseness=26] (p1);

\node [draw=white, fill=white,rectangle] (c) at (0.7,-1) {$s$};
\node [draw=white, fill=white,rectangle] (c) at (0,0.1) {$s$};

\node [draw=white, fill=white,rectangle] (a) at (0,-3) {$(c)$};
       \end{tikzpicture}
\end{center}
\vspace{-0.3cm}
\caption{The quotient gain graph $(H,\psi)$ of the body-bar framework in Section~\ref{sec:exambb} (a) and the  gain graphs $(H_{0},\psi)$ (b) and $(H_{1},\psi)$ (c).}
\label{fig:bbpicthmill}
\end{figure}

Then we have  $$|E(H_{0})|=4> 3=6|V(H_{0})|-
\frac{1}{|\mathcal{C}_s|}\sum_{\gamma \in \mathcal{C}_s} {\rm Trace}(\hat{\tau}^{(2)}_{\bm 0}(\gamma))
$$
since $\hat{\tau}^{(2)}_0(id)=I_6$ and $$\hat{\tau}^{(2)}_0(s)=\left(\begin{array}{c c c c c c} 1 & 0 & 0 & 0 & 0 & 0\\ 0 & -1 & 0 & 0 & 0& 0\\ 0& 0 & 1 & 0& 0 & 0\\ 0& 0 & 0& -1 & 0 & 0 \\ 0& 0 & 0 & 0& 1 & 0\\ 0& 0 & 0& 0 & 0 & -1 \end{array}\right).$$ (Recall the definition of  $\hat{\tau}^{(2)}_0(s)=\rho_0(s)\cdot \hat{\tau}^{(2)}(s)=\hat{\tau}^{(2)}(s)$ from Section~\ref{sec:exambb}.)
Similarly, we have $$|E(H_{1})|=2<3=6|V(H_{1})|-
\frac{1}{|\mathcal{C}_s|}\sum_{\gamma \in \mathcal{C}_s} {\rm Trace}(\hat{\tau}^{(2)}_{1}(\gamma)).$$ Thus, condition \textbf{(2)} in Corollary~\ref{cor:body_bar} is violated for $H_1$, and hence  $(G,\bb)$ has a $\rho_1$-symmetric (or anti-symmetric) infinitesimal flex.

As a second example, let us consider a $\mathcal{C}_2$-generic body-bar realization $(G,\bb)$ of the same multigraph $G$ (as shown in Figure~\ref{fig:bbc2real} (a)), where $\mathcal{C}_2=\{id, C_2\}$ describes half-turn symmetry and $id$ and $C_2$ are identified with $0$ and $1$ in $\mathbb{Z}/2\mathbb{Z}$, respectively. Recall 
that the  group $\mathcal{C}_2$ has two non-equivalent  irreducible representations which are denoted by $\rho_0$ and $\rho_1$.

\vspace{0.2cm}
\begin{figure}[htp]
\begin{center}
\begin{tikzpicture}[very thick,scale=1]
\tikzstyle{every node}=[circle, draw=black, fill=white, inner sep=0pt, minimum width=5pt];
\filldraw[fill=black!20!white, draw=black, thin](0,-1.8)ellipse(1.2cm and 0.35cm);
\filldraw[fill=black!20!white, draw=black, thin](0,-0.2)ellipse(1.2cm and 0.35cm);

\node [circle, draw=black!20!white, shade, ball color=black!40!white, inner sep=0pt, minimum width=7pt](p1) at (-0.3,-1.95) {};
\node [circle, draw=black!20!white, shade, ball color=black!40!white, inner sep=0pt, minimum width=7pt](p2) at (-0.78,-1.85) {};
\node [circle, draw=black!20!white, shade, ball color=black!40!white, inner sep=0pt, minimum width=7pt](p3) at (0,-1.6) {};
\node [circle, draw=black!20!white, shade, ball color=black!40!white, inner sep=0pt, minimum width=7pt](p4) at (0.4,-1.8) {};
\node [circle, draw=black!20!white, shade, ball color=black!40!white, inner sep=0pt, minimum width=7pt](p5) at (0.7,-1.7) {};
\node [circle, draw=black!20!white, shade, ball color=black!40!white, inner sep=0pt, minimum width=7pt](p6) at (1,-1.8) {};

\node [circle, draw=black!20!white, shade, ball color=black!40!white, inner sep=0pt, minimum width=7pt](t1) at (-0.3,-0.05) {};
\node [circle, draw=black!20!white, shade, ball color=black!40!white, inner sep=0pt, minimum width=7pt](t2) at (-0.78,-0.15) {};
\node [circle, draw=black!20!white, shade, ball color=black!40!white, inner sep=0pt, minimum width=7pt](t3) at (0,-0.4) {};
\node [circle, draw=black!20!white, shade, ball color=black!40!white, inner sep=0pt, minimum width=7pt](t4) at (0.4,-0.2) {};
\node [circle, draw=black!20!white, shade, ball color=black!40!white, inner sep=0pt, minimum width=7pt](t5) at (0.7,-0.3) {};
\node [circle, draw=black!20!white, shade, ball color=black!40!white, inner sep=0pt, minimum width=7pt](t6) at (1,-0.2) {};

\node [draw=white, fill=white,rectangle] (a) at (-0.05,-0.8) {$h$};
\node [draw=white, fill=white,rectangle] (a) at (0.6,-0.8) {$k$};
\draw(p1)--(t1);
\draw(p2)--(t2);
\draw(p3)--(t4);
\draw(p4)--(t3);
\draw(p5)--(t6);
\draw(p6)--(t5);
\node [draw=white, fill=white] (a) at (0,-3) {$(a)$};

\node [draw=white, fill=white,rectangle] (a) at (-1,-0.8) {$e$};
\node [draw=white, fill=white,rectangle] (a) at (-0.5,-0.8) {$f$};

\node [draw=white, fill=white,rectangle] (c) at (0,0.5) {$u$};
\node [draw=white, fill=white,rectangle] (c) at (0,-2.45) {$v$};
\draw[dashed, thin](-1.8,-1)--(2,-1);
\node [draw=white, fill=white,rectangle] (c) at (1.8,-1.25) {$C_2$};

\end{tikzpicture}
\hspace{0.8cm}
   \begin{tikzpicture}[very thick,scale=1]
\tikzstyle{every node}=[circle, draw=black, fill=white, inner sep=0pt, minimum width=5pt];
       \path (0,-1) node (p1)  {} ;

 \path
(p1) edge [loop above,->, >=stealth,shorten >=2pt,looseness=46] (p1);
 \path
(p1) edge [loop left,->, >=stealth,shorten >=2pt,looseness=26] (p1);
 \path
(p1) edge [loop below,->, >=stealth,shorten >=2pt,looseness=46] (p1);
 \path
(p1) edge [loop right,->, >=stealth,shorten >=2pt,looseness=26] (p1);


\node [draw=white, fill=white,rectangle] (c) at (-0.9,-1) {$C_2$};
\node [draw=white, fill=white,rectangle] (c) at (0.9,-1) {$C_2$};
\node [draw=white, fill=white,rectangle] (c) at (0,0.1) {$C_2$};
\node [draw=white, fill=white,rectangle] (c) at (0,-2.15) {$C_2$};

\node [draw=white, fill=white,rectangle] (c) at (1.8,-1.8) {$L=\{\tilde{e},\tilde{f}\}$};

\node [draw=white, fill=white] (a) at (0,-3) {$(b)$};
       \end{tikzpicture}
      \hspace{0.8cm}
         \begin{tikzpicture}[very thick,scale=1]
\tikzstyle{every node}=[circle, draw=black, fill=white, inner sep=0pt, minimum width=5pt];
       \path (0,-1) node (p1)  {} ;

 \path
(p1) edge [loop above,->, >=stealth,shorten >=2pt,looseness=46] (p1);
 \path
(p1) edge [loop left,->, >=stealth,shorten >=2pt,looseness=26] (p1);
 \path
(p1) edge [loop below,->, >=stealth,shorten >=2pt,looseness=46] (p1);
 \path
(p1) edge [loop right,->, >=stealth,shorten >=2pt,looseness=26] (p1);

\node [draw=white, fill=white,rectangle] (c) at (-0.9,-1) {$C_2$};
\node [draw=white, fill=white,rectangle] (c) at (0.9,-1) {$C_2$};
\node [draw=white, fill=white,rectangle] (c) at (0,0.1) {$C_2$};
\node [draw=white, fill=white,rectangle] (c) at (0,-2.15) {$C_2$};

\node [draw=white, fill=white,rectangle] (a) at (0,-3) {$(c)$};
       \end{tikzpicture}
 \hspace{0.8cm}
         \begin{tikzpicture}[very thick,scale=1]
\tikzstyle{every node}=[circle, draw=black, fill=white, inner sep=0pt, minimum width=5pt];
       \path (0,-1) node (p1)  {} ;

 \path
(p1) edge [loop above,->, >=stealth,shorten >=2pt,looseness=46] (p1);
 \path
(p1) edge [loop right,->, >=stealth,shorten >=2pt,looseness=26] (p1);

\node [draw=white, fill=white,rectangle] (c) at (0.9,-1) {$C_2$};
\node [draw=white, fill=white,rectangle] (c) at (0,0.1) {$C_2$};

\node [draw=white, fill=white,rectangle] (a) at (0,-3) {$(d)$};
       \end{tikzpicture}
\end{center}
\vspace{-0.3cm}
\caption{A Stewart platform with half-turn symmetry (a), its quotient gain graph $(H,\psi)$ (b) and the induced gain graphs $(H_0,\psi)$ (c) and $(H_1,\psi)$ (d). }
\label{fig:bbc2real}
\end{figure}
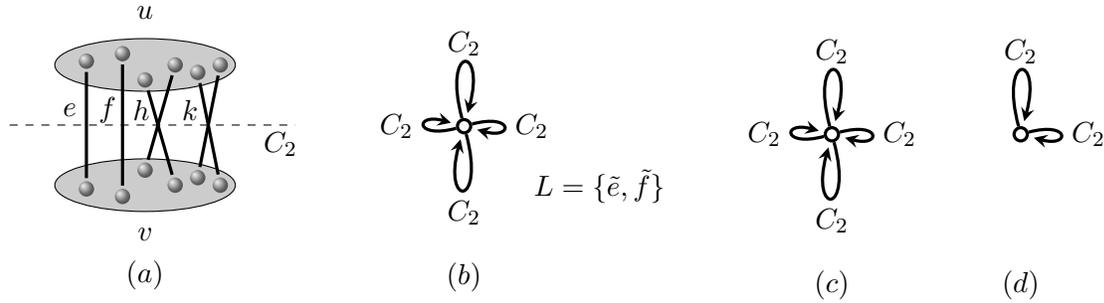

Suppose that the half-turn axis of $C_2$ is the $x$-axis, that is,  $\hat{\tau}(C_2)= \left(\begin{array}{c c c c} 1 & 0 & 0 & 0\\ 0 & -1 & 0 & 0\\ 0& 0 & -1 & 0\\ 0& 0 & 0&1\end{array}\right)$. Then we have
\begin{equation}\label{eq:tauhat} \hat{\tau}^{(2)}_g(C_2)=\rho_g(C_2)\cdot \hat{\tau}^{(2)}(C_2)=\rho_g(C_2)\cdot\left(\begin{array}{c c c c c c} -1 & 0 & 0 & 0 & 0 & 0\\ 0 & -1 & 0 & 0 & 0& 0\\ 0& 0 & 1 & 0& 0 & 0\\ 0& 0 & 0& 1 & 0 & 0 \\ 0& 0 & 0 & 0& -1 & 0\\ 0& 0 & 0& 0 & 0 & -1 \end{array}\right),\end{equation}
where $\rho_g(C_2)=1$ for $g=0$ and $\rho_g(C_2)=-1$ for $g=1$.

Conditions \textbf{(1)} and \textbf{(2)} of Corollary~\ref{cor:body_bar} are then clearly satisfied, since we have
$$|E(H_{0})|=4=6|V(H_{0})|-
\frac{1}{|\mathcal{C}_2|}\sum_{\gamma \in \mathcal{C}_2}
{\rm Trace}(\hat{\tau}^{(2)}_{0}(\gamma)).$$
and
$$|E(H_{1})|=2=6|V(H_{1})|-
\frac{1}{|\mathcal{C}_2|}\sum_{\gamma \in \mathcal{C}_2} {\rm Trace}(\hat{\tau}^{(2)}_{1}(\gamma)).$$
So let us check condition \textbf{(3)} of Corollary~\ref{cor:body_bar}. First, we consider $H_0$ shown in Figure~\ref{fig:bbc2real}(c). Let $F$ be a subset of $E(H_0)$ which consists of a single loop, say $F=\{\te\}$ (where $\psi(\te)=C_2$). Then
$$\psi^{i,j}_0(\te)=\tau^{i,j}_0(\psi(\te))=\tau^{i,j}_0(C_2),$$
and hence, by (\ref{eq:tauhat}), $\psi^{i,j}_0(\te)=-1$ for $(i,j)=(1,2),(1,3),(2,4),(3,4)$ and $\psi^{i,j}_0(\te)=1$ for $(i,j)=(1,4),(2,3)$. Thus, by (\ref{eq:alpha}), $\sum_{1\leq i < j \leq 6}\alpha^{i,j}_0(F)=1+1+0+0+1+1=4$, and hence
$$|F|=1<4= 6|V(F)|-6+\sum_{1\leq i < j \leq 6}\alpha^{i,j}_0(F).$$
For the other subsets of $E(H_0)$, condition \textbf{(3)} of Corollary~\ref{cor:body_bar} is verified analogously.

Finally, consider $H_1$ shown in Figure~\ref{fig:bbc2real}(d). Let $F$ be a subset of $E(H_1)$ which consists of a single loop, say $F=\{\tilde{h}\}$ (where $\psi(\tilde{h})=C_2$). Then
$$\psi^{i,j}_1(\tilde{h})=\tau^{i,j}_1(\psi(\tilde{h}))=\tau^{i,j}_1(C_2),$$
and hence, by (\ref{eq:tauhat}), $\psi^{i,j}_1(\tilde{h})=1$ for $(i,j)=(1,2),(1,3),(2,4),(3,4)$ and $\psi^{i,j}_1(\tilde{h})=-1$ for $(i,j)=(1,4),(2,3)$. Thus, by (\ref{eq:alpha}), we have
$$|F|=1<2= 6|V(F)|-6+\sum_{1\leq i < j \leq 6}\alpha^{i,j}_1(F).$$
For the other subsets of $E(H_1)$, condition \textbf{(3)} of Corollary~\ref{cor:body_bar} is again verified analogously.

Therefore, we may conclude that $\mathcal{C}_2$-generic body-bar realizations of $G$ (such as the one in Figure~\ref{fig:bbc2real}(a)) are infinitesimally rigid (isostatic).



\section{Body-hinge frameworks}\label{sec:hinge}

A body-hinge framework is a structural model consisting of rigid bodies which are pairwise connected by hinges as shown in Figure~\ref{fig:bh}(a).
A body-hinge framework can again be regarded as a special case of a bar-joint framework
by replacing each body by a complete framework with sufficiently many joints, and
all the theory developed so far can be applied to this model.

Of particular importance for applications (e.g., for rigidity and flexibility analyses of biomolecules or robotic linkages) are $3$-dimensional body-hinge frameworks. Since a hinge removes $5$ of the $6$ relative degrees of freedom between a pair of rigid bodies in $3$-space, a $3$-dimensional body-hinge
framework can be modeled as a special case of a body-bar framework by replacing each hinge with 5 independent bars, each intersecting the hinge line (see Figure~\ref{fig:bh}(a)).

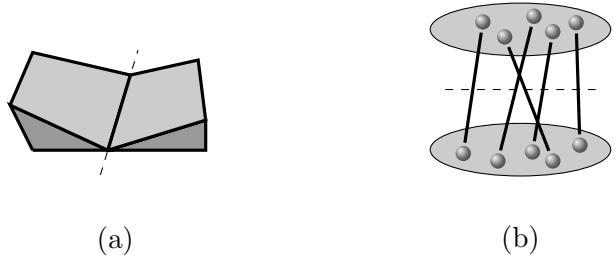
\begin{figure}[htp]
\begin{center}
\begin{tikzpicture}[very thick,scale=1]
\tikzstyle{every node}=[circle, draw=black, fill=white, inner sep=0pt, minimum width=5pt];
\filldraw[fill=black!40!white, draw=black]
    (0,0) -- (1,0) -- (-0.3,0.6) -- (0,0) ;
    \filldraw[fill=black!20!white, draw=black]
    (-0.3,0.6) -- (1,0) -- (1.3,1) -- (0,1.3) -- (-0.3,0.6);

     \filldraw[fill=black!40!white, draw=black]
     (1,0) -- (2.3,0) -- (2.3,0.4) -- (1,0);
     \filldraw[fill=black!20!white, draw=black]
    (1,0) -- (1.3,1) -- (2.2,1.2) -- (2.3,0.4) -- (1,0);

     \draw[dashed,thin](0.9,-0.33)--(1.4,1.333);
     \node [draw=white, fill=white] (a) at (1.1,-1.2) {(a)};
\end{tikzpicture}
       \hspace{2.7cm}
\begin{tikzpicture}[very thick,scale=1]
\tikzstyle{every node}=[circle, draw=black, fill=white, inner sep=0pt, minimum width=5pt];

\filldraw[fill=black!20!white, draw=black, thin](0,-1.8)ellipse(1.2cm and 0.35cm);
\filldraw[fill=black!20!white, draw=black, thin](0,-0.2)ellipse(1.2cm and 0.35cm);

\node [circle, draw=black!20!white, shade, ball color=black!40!white, inner sep=0pt, minimum width=7pt](p1) at (-0.3,-1.95) {};
\node [circle, draw=black!20!white, shade, ball color=black!40!white, inner sep=0pt, minimum width=7pt](p2) at (-0.76,-1.85) {};
\node [circle, draw=black!20!white, shade, ball color=black!40!white, inner sep=0pt, minimum width=7pt](p4) at (0.16,-1.83) {};
\node [circle, draw=black!20!white, shade, ball color=black!40!white, inner sep=0pt, minimum width=7pt](p5) at (0.79,-1.74) {};
\node [circle, draw=black!20!white, shade, ball color=black!40!white, inner sep=0pt, minimum width=7pt](p6) at (0.43,-1.95) {};

\node [circle, draw=black!20!white, shade, ball color=black!40!white, inner sep=0pt, minimum width=7pt](t1) at (0.18,-0.025) {};
\node [circle, draw=black!20!white, shade, ball color=black!40!white, inner sep=0pt, minimum width=7pt](t3) at (-0.5,-0.1) {};
\node [circle, draw=black!20!white, shade, ball color=black!40!white, inner sep=0pt, minimum width=7pt](t4) at (0.43,-0.23) {};
\node [circle, draw=black!20!white, shade, ball color=black!40!white, inner sep=0pt, minimum width=7pt](t5) at (0.74,-0.11) {};
\node [circle, draw=black!20!white, shade, ball color=black!40!white, inner sep=0pt, minimum width=7pt](t6) at (-0.21,-0.3) {};

\draw(p1)--(t1);
\draw(p2)--(t3);
\draw(p4)--(t4);
\draw(p5)--(t5);
\draw(p6)--(t6);
\node [draw=white, fill=white] (a) at (0,-3) {(b)};

\draw[dashed,thin](-1,-1)--(1,-1);
\end{tikzpicture}

\end{center}
\vspace{-0.3cm}
\caption{(a) A $3$-dimensional body-hinge framework consisting of two bodies which are connected by a hinge. (b) In $3$-space, a hinge can be modeled as a set of $5$ independent bars, each intersecting the hinge line. }
\label{fig:bh}
\end{figure}


The infinitesimal rigidity of generic body-hinge frameworks in $\mathbb{R}^d$ was
characterized independently by Whiteley \cite{wwmatun,TW1} and Tay \cite{tay89,tay91}. In the following, we will give a symmetric version of  their result by  formulating the infinitesimal rigidity of body-hinge frameworks again in terms of
Pl{\"u}cker coordinates.

We define a {\em body-hinge framework} to be a pair $(G,\bh)$ of an undirected graph $G$ and a hinge-configuration
\begin{align}
\label{eq:bar_confbh}
\begin{split}
\bh:\quad E(G) \quad &\rightarrow Gr(d-1,d+1) \\
e=\{u,v\} &\mapsto \hat{p}_{e,1}\wedge \hat{p}_{e,2}\wedge \dots \wedge \hat{p}_{e,d-1}.
\end{split}
\end{align}
That is, $\bh(e)$ indicates the Pl{\"u}cker coordinates of
a hinge, i.e., a $(d-1)$-dimensional simplex determined by points $p_{e,1},\dots, p_{e,d-1}$
in the bodies of $u$ and $v$.

An infinitesimal motion of a body-hinge framework $(G,\bh)$ is defined as
$\bmm:V(G)\rightarrow \mathbb{R}^{d+1\choose 2}$
satisfying
\begin{equation}
\label{eq:inf_body_hinge}
\bmm(u)-\bmm(v)\in {\rm span}\{\bh(e)\} \qquad \text{for all } \{u,v\}\in E(G).
\end{equation}
Observe that $\bmm$ is an infinitesimal motion if $\bmm(u)=\bmm(v)$ for all $u,v\in V(G)$.
Such a motion is called a trivial motion, and
$(G,\bh)$ is called {\em infinitesimally rigid} if all infinitesimal motions of $(G,\bh)$ are trivial.

For every $e\in E(G)$, let us prepare $({d+1\choose 2}-1)$ copies of $e$,
denoted by $e_1,\dots, e_{{d+1\choose 2}-1}$; the set of all copied edges we denote by $({d+1\choose 2}-1)E(G)$.
Also, let $({d+1\choose 2}-1)G=(V(G), ({d+1\choose 2}-1)E(G))$.

For the hinge-configuration $\bh$, we take $\bb:({d+1 \choose 2}-1)E(G)\rightarrow Gr(2,d+1)$ so that
$\{\bb(e_i)\mid 1\leq i\leq {d+1\choose 2}-1\}$ is a basis of the orthogonal complement of
${\rm span}\{\ast \bh(e)\}$.
Then $(G,\bh)$ is infinitesimally rigid if and only if
$(({d+1\choose 2}-1)G,\bb)$ is infinitesimally rigid.
Thus a body-hinge framework $(G,\bh)$ can be regarded as a body-bar framework
$(({d+1\choose 2}-1)G,\bb)$  with the extra condition
that
$\{\bb(e_i)\mid 1\leq i\leq {d+1\choose 2}-1\}$ is a basis of the orthogonal complement of
a one-dimensional space spanned by $\ast\bh(e)$ for each $e\in E(G)$.

Now let us introduce $\Gamma$-symmetric body-hinge frameworks.
Suppose $\Gamma$ is a group with $\tau:\Gamma\rightarrow O(\mathbb{R}^d)$.
We say that a body-hinge framework $(G,\bh)$ is {\em $\Gamma$-symmetric} (with respect to $\tau$ and
$\theta:\Gamma\rightarrow {\rm Aut}(G)$) if
$G$ is $\Gamma$-symmetric with respect to $\theta$ and
$$\bh(\theta(\gamma) e)=\hat{\tau}^{(d-1)}(\gamma)\bh(e) \textrm{ for every } e\in E(G) \textrm{ and } \gamma\in \Gamma.$$

It is not difficult to check that if $(G,\bh)$ is $\Gamma$-symmetric and $\theta$  acts freely on $E(G)$,
then there exists a body-bar framework $(({d+1\choose 2}-1)G,\bb)$  so that
$(({d+1\choose 2}-1)G,\bb)$ is $\Gamma$-symmetric (with respect to
$\tau$ and $\theta':\Gamma\rightarrow {\rm Aut}\left({d+1\choose 2}-1)G\right)$, which is obtained from
$\theta$ in an obvious manner).
The framework $(({d+1\choose 2}-1)G,\bb)$ is called a $\Gamma$-symmetric body-bar framework
{\em associated with} $(G,\bh)$.

We say that $(G,\bh)$ is $\Gamma$-regular if
the dimension of the space of infinitesimal motions of $(G,\bh)$ is minimized among all
$\Gamma$-symmetric body-hinge realizations $(G,\bh')$ of $G$.

Also, for a $\Gamma$-gain graph $(H,\psi)$,
$(({d+1\choose 2}-1)H,\psi)$ denotes the $\Gamma$-gain graph obtained from $(H,\psi)$ by replacing
each edge $\te$ by ${d+1\choose 2}-1$ parallel copies $\te_1,\dots, \te_{{d+1\choose 2}-1}$
with $\psi(\te_i)=\psi(\te)$.

\begin{theorem}
\label{thm:body_hinge}
Let $\Gamma=\mathbb{Z}/2\mathbb{Z}\times \dots \times \mathbb{Z}/2\mathbb{Z}$,
$\tau:\Gamma\rightarrow O(\mathbb{R}^d)$ be a faithful representation,
$(G,\bh)$ be a $\Gamma$-regular  body-hinge framework,
and $(H,\psi)$ be the quotient $\Gamma$-gain graph.
Suppose that $\Gamma$  acts freely on the edge set of $G$.
Then the following are equivalent.
\begin{itemize}
\item $(G,\bh)$ is infinitesimally rigid; 
\item for every ${\bm g}\in \Gamma$, $({d+1\choose 2}-1)H$ contains a spanning subgraph
$H_{\bm g}$ satisfying {\rm \textbf{(2)}} and {\rm \textbf{(3)}} of Corollary~\ref{cor:body_bar};
\item for every ${\bm g}\in \Gamma$, $({d+1\choose 2}-1)H$ contains a spanning subgraph
$H_{\bm g}$ satisfying {\rm \textbf{(2)}} of Corollary~\ref{cor:body_bar} that contains ${d+1\choose 2}$ edge-disjoint spanning subgraphs $H_{1,2}, \dots, H_{d,d+1}$ such that each connected component of $(H_{i,j},\psi_{\bm g}^{i,j})$ contains no cycle or just one cycle, which is negative.
\end{itemize}
\end{theorem}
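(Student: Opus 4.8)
The plan is to reduce the statement to the body-bar results of Section~\ref{bbcombchar}. Since $\Gamma$ acts freely on $E(G)$, the body-hinge framework $(G,\bh)$ has a well-defined associated $\Gamma$-symmetric body-bar framework $(({d+1\choose 2}-1)G,\bb)$; its quotient $\Gamma$-gain graph is $(({d+1\choose 2}-1)H,\psi)$ with $L=\emptyset$, and $(G,\bh)$ is infinitesimally rigid if and only if $(({d+1\choose 2}-1)G,\bb)$ is. Consequently condition \textbf{(1)} of Corollary~\ref{cor:body_bar} is vacuous for $(({d+1\choose 2}-1)H,\psi)$, and the second and third bullets of the statement are exactly the second and third bullets of Corollary~\ref{cor:body_bar} applied to $(({d+1\choose 2}-1)H,\psi)$; in particular they are equivalent. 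Hence it suffices to prove that $(G,\bh)$ is infinitesimally rigid if and only if for every ${\bm g}\in\Gamma$ the gain graph $({d+1\choose 2}-1)H$ contains a spanning subgraph $H_{\bm g}$ satisfying \textbf{(2)} and \textbf{(3)} of Corollary~\ref{cor:body_bar}.

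For the necessity direction, suppose $(G,\bh)$ is infinitesimally rigid. Then the associated body-bar framework is infinitesimally rigid, so ${\rm rank}\,R(({d+1\choose 2}-1)G,\bb)$ attains the maximum possible value ${d+1\choose 2}(|V(G)|-1)$. A $\Gamma$-regular body-bar realization of $({d+1\choose 2}-1)G$ has rank at least this large, hence exactly this, so it is infinitesimally rigid; applying Corollary~\ref{cor:body_bar} to it produces, for each ${\bm g}$, the required subgraph $H_{\bm g}$.

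Sufficiency is the substantial direction. Fix ${\bm g}$ and a spanning subgraph $H_{\bm g}\subseteq({d+1\choose 2}-1)H$ satisfying \textbf{(2)} and \textbf{(3)}. By the equivalence \textbf{(ii)}$\Leftrightarrow$\textbf{(iii)} of Theorem~\ref{thm:comb_body} we may decompose $H_{\bm g}$ into edge-disjoint subgraphs $\{H_{i,j}\mid 1\le i<j\le d+1\}$ so that every connected component of $(H_{i,j},\psi_{\bm g}^{i,j})$ contains at most one cycle, which is negative if present. Two parallel copies of an edge have the same $\psi_{\bm g}^{i,j}$-value and would form a positive $2$-cycle, so each $H_{i,j}$ contains at most one of the ${d+1\choose 2}-1$ copies of any edge $\te$ of $H$; in particular the copies of $\te$ lying in $H_{\bm g}$ are assigned to pairwise distinct index pairs, so for each $\te$ I can fix an index pair $(i_\te,j_\te)$ used by no copy of $\te$ in $H_{\bm g}$. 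I then choose a $\Gamma$-symmetric hinge-configuration $\bh'$ in which the hinge of the orbit of $\te$ is (a lift of) $\ast({\bf e}_{i_\te}\wedge{\bf e}_{j_\te})$ and the ${d+1\choose 2}-1$ bars replacing it are the coordinate $2$-extensors $\{{\bf e}_i\wedge{\bf e}_j:(i,j)\ne(i_\te,j_\te)\}$, distributed so that the copy of $\te$ in $H_{i,j}$ (if any) carries ${\bf e}_i\wedge{\bf e}_j$; this is compatible with the hinge constraint because $\{{\bf e}_i\wedge{\bf e}_j:(i,j)\ne(i_\te,j_\te)\}$ is a basis of the orthogonal complement of ${\rm span}\{\ast\bh'(\te)\}$. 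With this choice, exactly as in the proof of Theorem~\ref{thm:comb_body}, the submatrix of $O_{\bm g}(({d+1\choose 2}-1)H,\psi,\tilde{\bb}')$ on the rows of $E(H_{\bm g})$ is block-diagonal of the form~(\ref{eq:iii_i2}) with diagonal blocks the representation matrices $I(H_{i,j},\psi_{\bm g}^{i,j})$, each of which has independent rows since $E(H_{i,j})$ is independent in ${\cal G}(H_{i,j},\psi_{\bm g}^{i,j})$ (for loops one uses that the relevant loop is negative by \textbf{(3)}, as in the Claim there, with the simplification that $L=\emptyset$ removes the extra geometric restriction on loop hinges). Thus ${\rm rank}\,O_{\bm g}(({d+1\choose 2}-1)H,\psi,\tilde{\bb}')\ge|E(H_{\bm g})|$, which by \textbf{(2)} equals the maximal possible rank of the ${\bm g}$-block, since the $\rho_{\bm g}$-symmetric trivial motions always lie in ${\rm ker}\,O_{\bm g}$.

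To conclude, the space of $\Gamma$-symmetric hinge-configurations of $G$ is irreducible (a product of Grassmannians) and, for each fixed ${\bm g}$, ${\rm rank}\,O_{\bm g}$ is a lower-semicontinuous function of the hinge-configuration; hence the set of configurations attaining the block-maximal rank for \emph{all} ${\bm g}\in\Gamma$ simultaneously is non-empty, indeed generic. A $\Gamma$-regular framework maximises the total rank $\sum_{\bm g}{\rm rank}\,O_{\bm g}$, which for such a generic framework already equals the sum of the block maxima, so a $\Gamma$-regular framework attains the block-maximal rank in every block; by the previous paragraph this gives ${\rm rank}\,R(G,\bh)={d+1\choose 2}(|V(G)|-1)$, i.e.\ $(G,\bh)$ is infinitesimally rigid. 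I expect the main obstacle to lie entirely in the sufficiency step: one must realise the purely combinatorial decomposition of $H_{\bm g}$ by an \emph{honest} hinge-configuration, in which the bars replacing each hinge span the orthogonal complement of a genuine $2$-extensor $\ast\bh'(\te)$. The counting fact that each $H_{i,j}$ uses at most one copy of each edge is what frees the index pair $(i_\te,j_\te)$; however $\ast({\bf e}_{i_\te}\wedge{\bf e}_{j_\te})$ is an honest hinge only when $d+1\notin\{i_\te,j_\te\}$, and when every available pair involves the homogenizing index $d+1$ one must replace it by a small generic perturbation keeping the hinge non-degenerate --- this is legitimate because row-independence of the $E(H_{\bm g})$-block, being an open condition, survives small perturbations of $\tilde{\bb}'$.
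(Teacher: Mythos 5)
Your proof follows essentially the same route as the paper's: reduce to Theorem~\ref{thm:comb_body} for the associated body-bar framework $(({d+1\choose 2}-1)G,\bb)$, and for sufficiency realise the decomposition by an explicit hinge-configuration in which each edge orbit gets the hinge dual to an index pair $(i_{\te},j_{\te})$ used by no copy of $\te$ — which exists precisely because each $H_{i,j}$ can contain at most one copy of any edge. The two points you flag at the end, namely that the coordinate hinge may need a small perturbation to be an honest affine hinge and that total-rank maximality of a $\Gamma$-regular configuration must be upgraded to per-block maximality via semicontinuity and irreducibility of the configuration space, are both glossed over in the paper's proof, and your treatment of each is correct.
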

\begin{proof}
Let $(({d+1\choose 2}-1)G,\bb)$ be a $\Gamma$-symmetric body-bar framework
associated with $(G,\bh)$.
It suffices to show that conditions \textbf{(i)}-\textbf{(iii)} of Theorem~\ref{thm:comb_body} are equivalent for
$(({d+1\choose 2}-1)H,\psi,\tilde{\bb})$.
The equivalence of \textbf{(ii)} and \textbf{(iii)} is nothing but a consequence of the matroid union theorem, as we have seen
in the proof of Theorem~\ref{thm:comb_body}.
Also, the proof of Theorem~\ref{thm:comb_body} shows that
\textbf{(i)}$\Rightarrow$\textbf{(ii)} holds for every $\Gamma$-symmetric body-bar framework.
So it suffices to show \textbf{(iii)}$\Rightarrow$\textbf{(i)} for $(({d+1\choose 2}-1)H,\psi,\tilde{\bb})$.

It should be noted that by construction,
\begin{equation}
\label{eq:hinge_condition2}
\text{$\left\{\tilde{\bb}(\te_i)\mid 1\leq i\leq {d+1\choose 2}-1\right\}$ is a basis of the orthogonal
complement of ${\rm span}\{\ast\tilde{\bh}(\te)\}$ }
\end{equation}
for every $\te\in E(H)$.
This implies that $\tilde{\bb}$ may not be $\Gamma$-regular, and
we need to show that the rank does not decrease even if $\tilde{\bb}$ satisfies (\ref{eq:hinge_condition2}).

To see this, suppose that $({d+1\choose 2}-1)H$ can be decomposed into ${d+1\choose 2}$ subgraphs
$H_{1,2},\dots, H_{d,d+1}$, as specified in \textbf{(iii)}.
We define $\tilde{\bb}':E(({d+1\choose 2}-1)H)\rightarrow Gr(2,d+1)$ by
\begin{equation*}
\tilde{\bb}'(\te)={\bf e}_i\wedge {\bf e}_j \qquad (\te\in E(H_{i,j})).
\end{equation*}
Then in the proof of Theorem~\ref{thm:comb_body} we have already shown that
\[
 {\rm rank}\ O_{\bm g}\left(\left({d+1\choose 2}-1\right)H,\psi,\tilde{\bb'}\right)=\left({d+1\choose 2}-1\right)|E(H)|.
\]

On the other hand, let us define $\tilde{\bh}':E(H)\rightarrow Gr(d-1,d+1)$ as follows.
For each $\te\in E(H)$, there is a pair $(a,b)$ such that $H_{a,b}$ does not contain any copy of $\te$.
Let  $\{i_1,\dots, i_{d-1}\}$ be the complement of $\{a,b\}$ among $\{1,2,\dots, d+1\}$,
and let  $\tilde{\bh}'(\te)={\bf e}_{i_1}\wedge \dots \wedge {\bf e}_{i_{d-1}}$.

Observe that every $H_{i,j}$ contains at most one copy of $\te\in E(H)$.
Therefore, $\{\tilde{\bb}'(\te_i)\mid 1\leq i\leq {d+1\choose 2}\}$ is linearly independent.
Moreover, due to the choice of $\tilde{\bh}'$, we have
$\langle \tilde{\bb}'(\te_i), \ast \tilde{\bh}'(\te)\rangle=  \tilde{\bb}'(\te_i)\circ \tilde{\bh}'(\te)= 0$ for every $\te\in E(H_{\bm g})$ and
any copy $\tilde{e}_i$ of $\te$.
Therefore,
$\{\tilde{\bb}'(\te_i)\mid 1\leq i\leq {d+1\choose 2}\}$ is a basis of the orthogonal
complement of ${\rm span}\{\ast \tilde{\bh}'(\te)\}$.

Thus, $(({d+1\choose 2}-1)G,\bb')$ is a body-bar framework associated with $(G,\bh')$.
Since $\bh$ is $\Gamma$-regular, we obtain
${\rm rank}\ O_{\bm g}(({d+1\choose 2}-1)H,\psi,\tilde{\bb})\geq
{\rm rank}\ O_{\bm g}(({d+1\choose 2}-1)H,\psi,\tilde{\bb}')=({d+1\choose 2}-1)|E(H_{\bm g})|$.
Thus \textbf{(i)} holds.
\end{proof}
%
 %
If the underlying symmetry has small size, then most of the labeling functions $\psi_{\bm g}^{i,j}$ turn out to be identical and the combinatorial conditions of Theorem~\ref{thm:body_hinge} can be significantly simplified.
For example, in Section~\ref{subsec:body_bar_examples} we have seen the exact coordinates of $\hat{\tau}_{\bm g}^{(2)}$  in the case of $\Gamma={\cal C}_s$ or $\Gamma={\cal C}_2$
and  by specializing Theorem~\ref{thm:body_hinge} to these cases one can easily derive the following.
\begin{corollary}
Let $(G,\bh)$ be a ${\cal C}_s$-regular body-hinge framework in $\mathbb{R}^3$, where ${\cal C}_s$ denotes reflection symmetry. Suppose that ${\cal C}_s$ acts freely on the edge set of $G$. Then $(G,\bh)$ is infinitesimally rigid if and only if the quotient gain graph $(H,\psi)$ contains three edge-disjoint spanning trees and three subgraphs such that each connected component contains exactly one cycle, which is negative.
\end{corollary}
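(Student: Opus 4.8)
The plan is to specialize Theorem~\ref{thm:body_hinge} to $\Gamma=\mathcal{C}_s\cong\mathbb{Z}/2\mathbb{Z}$ and $d=3$, and then to collapse the resulting combinatorial conditions using the explicit diagonal form of the symmetry-adapted matrices $\hat\tau^{(2)}_{\bm g}$ already computed in Section~\ref{sec:exambb}. Here $\binom{d+1}{2}=\binom{4}{2}=6$ and $\binom{d+1}{2}-1=5$, so Theorem~\ref{thm:body_hinge} is applied to the $5$-fold copy $\big(\binom{d+1}{2}-1\big)(H,\psi)=5(H,\psi)$, and condition \textbf{(iii)} there asks for a decomposition into $6$ edge-disjoint spanning subgraphs $H_{1,2},\dots,H_{3,4}$. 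Since $\mathcal{C}_s$ acts freely on $E(G)$ we have $L=\emptyset$, so no zero loops occur, condition \textbf{(1)} of Corollary~\ref{cor:body_bar} is vacuous, and $H_{\bm g}$ ranges over all spanning subgraphs of $5(H,\psi)$.

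First I would pin down the two numerical inputs of Theorem~\ref{thm:body_hinge}. The group $\mathcal{C}_s$ has exactly the irreducible representations $\rho_0$ (trivial) and $\rho_1$. Taking the mirror of $s$ to be the $xy$-plane gives $\hat\tau(s)=\mathrm{diag}(1,1,-1,1)$, hence, as in Section~\ref{sec:exambb}, $\hat\tau^{(2)}(s)=\mathrm{diag}(1,-1,1,-1,1,-1)$ in the lexicographic order on the pairs $(i,j)$; therefore $\hat\tau^{(2)}_0(s)=\mathrm{diag}(1,-1,1,-1,1,-1)$, $\hat\tau^{(2)}_1(s)=\mathrm{diag}(-1,1,-1,1,-1,1)$, and $\hat\tau^{(2)}_{\bm g}(id)=I_6$ for both ${\bm g}$. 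Consequently the dimension $\frac{1}{2}\big(\mathrm{Tr}\,I_6+\mathrm{Tr}\,\hat\tau^{(2)}_{\bm g}(s)\big)$ of the space of $\rho_{\bm g}$-symmetric trivial motions equals $\frac{1}{2}(6+0)=3$ for ${\bm g}=0$ and ${\bm g}=1$ alike, so condition \textbf{(2)} of Corollary~\ref{cor:body_bar} reads $|E(H_{\bm g})|=6|V(H)|-3$ in either case.

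Next I would read off the induced labelings $\psi_{\bm g}^{i,j}=\tau_{\bm g}^{i,j}\circ\psi$. From the diagonal entries above, for ${\bm g}=0$ the three pairs $(i,j)\in\{(1,2),(1,4),(2,4)\}$ satisfy $\tau_0^{i,j}\equiv 1$, so the corresponding $\psi_0^{i,j}$ is the trivial ($+1$) labeling and $\mathcal{G}(H_{\bm g},\psi_0^{i,j})$ is just the cycle matroid of $H_{\bm g}$, whose bases are spanning trees; the remaining three pairs reproduce $\psi$ itself. For ${\bm g}=1$ the two triples of pairs merely swap, so the partition ``three trivial labelings plus three copies of $\psi$'' is the same in both cases. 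Hence, for each ${\bm g}$, condition \textbf{(iii)} of Theorem~\ref{thm:body_hinge} says that $5(H,\psi)$ contains a spanning subgraph $H_{\bm g}$ with $6|V(H)|-3$ edges that splits into $3$ edge-disjoint spanning forests (with respect to the trivial labeling) and $3$ edge-disjoint spanning subgraphs of $(H,\psi)$ in each of which every component has no cycle or one negative cycle. A spanning forest on $n=|V(H)|$ vertices has at most $n-1$ edges and such a negative pseudoforest at most $n$ edges, and $3(n-1)+3n=6n-3$ equals the prescribed total, so equality forces the three forests to be spanning trees and forces every component of the three signed parts to contain exactly one cycle, which is negative. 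Since the two irreducible representations yield literally the same requirement, the quantifier ``for every ${\bm g}\in\Gamma$'' collapses to this single packing condition.

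Plugging this back into the rigidity equivalence of Theorem~\ref{thm:body_hinge} gives the corollary. I expect no real obstacle here: the argument is pure bookkeeping, and the only points needing attention are (a) using the forced edge count $6|V(H)|-3=3(|V(H)|-1)+3|V(H)|$ to upgrade ``no cycle or one negative cycle'' to ``exactly one negative cycle per component'' (and ``forest'' to ``spanning tree''), and (b) verifying that $\rho_0$ and $\rho_1$ really produce the same combinatorial condition --- this is the simplification flagged just before the statement, that for small groups most of the functions $\psi_{\bm g}^{i,j}$ coincide. Finally, it is the free action of $\mathcal{C}_s$ on the edges that removes the zero-loop case analysis of Corollary~\ref{cor:body_bar} and makes the statement this clean.
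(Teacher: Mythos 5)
Your proposal is correct and is exactly the specialization the paper has in mind (the paper offers no separate proof, asserting only that the corollary follows ``by specializing Theorem~\ref{thm:body_hinge}''): you correctly identify the three trivial and three nontrivial induced labelings $\psi_{\bm g}^{i,j}$, observe that the two irreducible representations merely swap these triples, and use the count $6|V(H)|-3=3(|V(H)|-1)+3|V(H)|$ to force spanning trees and exactly-one-negative-cycle components. The only caveat is notational: your packing correctly lives in $5(H,\psi)$ as Theorem~\ref{thm:body_hinge} requires, whereas the corollary's phrasing ``$(H,\psi)$ contains'' is the paper's own imprecision, not a gap in your argument.
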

\begin{corollary}
Let $(G,\bh)$ be a ${\cal C}_2$-regular body-hinge framework in $\mathbb{R}^3$, where ${\cal C}_2$ denotes half-turn symmetry. Suppose that ${\cal C}_2$ acts freely on the edge set of $G$. Then $(G,\bh)$ is infinitesimally rigid if and only if the quotient gain graph $(H,\psi)$ contains two edge-disjoint spanning trees and four subgraphs such that each connected component contains exactly one cycle, which is negative.
\end{corollary}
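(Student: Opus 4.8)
The corollary is the $d=3$, $\Gamma=\mathcal{C}_2\cong\mathbb{Z}/2\mathbb{Z}$ instance of Theorem~\ref{thm:body_hinge}, so the plan is to make the two combinatorial conditions there (one per irreducible representation) completely explicit. From Section~\ref{subsec:body_bar_examples}, equation~(\ref{eq:tauhat}), with the half-turn axis along the $x$-axis one has $\hat{\tau}^{(2)}(C_2)=\mathrm{diag}(-1,-1,1,1,-1,-1)$ in the lexicographic order of the six pairs $(i,j)$ with $1\le i<j\le 4$. Since $\binom{4}{2}-1=5$ and $\rho_0(C_2)=1$, $\rho_1(C_2)=-1$, the matrix $\hat{\tau}^{(2)}_0(C_2)$ has entry $-1$ in the four coordinates $(1,2),(1,3),(2,4),(3,4)$ and entry $+1$ in the two coordinates $(1,4),(2,3)$, while for $\hat{\tau}^{(2)}_1(C_2)$ these two roles are interchanged. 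Reading off the induced labelings $\psi_{\bm g}^{i,j}$ — each of which sends an edge with gain $C_2$ to the corresponding diagonal entry and an edge with gain $\mathrm{id}$ to $1$ — we conclude that for $\bm g=0$ exactly four of the six labelings coincide with $\psi$ and two are the trivial labeling, whereas for $\bm g=1$ exactly two coincide with $\psi$ and four are trivial.

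Next I would feed this into condition~(iii) of Theorem~\ref{thm:body_hinge}, which asks that $5H:=(\binom{4}{2}-1)H$ contain a spanning subgraph satisfying the edge-count condition~\textbf{(2)} of Corollary~\ref{cor:body_bar} and decomposing into six edge-disjoint spanning subgraphs $H_{i,j}$ in which every connected component of $(H_{i,j},\psi_{\bm g}^{i,j})$ contains no cycle, or exactly one cycle which is negative. A graph carrying the trivial gain has no negative cycle, so the parts indexed by the trivially-labeled coordinates must be forests; and since $\frac{1}{|\Gamma|}\sum_{\gamma}\mathrm{Trace}(\hat{\tau}^{(2)}_{\bm g}(\gamma))$ equals $2$ for $\bm g=0$ and $4$ for $\bm g=1$, condition~\textbf{(2)} together with a routine edge count forces those forest-parts to be spanning trees and the remaining parts to be spanning subgraphs with every component unicyclic and the cycle negative with respect to $\psi$. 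Thus Theorem~\ref{thm:body_hinge} now reads: $(G,\bh)$ is infinitesimally rigid if and only if $5H$ contains two edge-disjoint spanning trees together with four further edge-disjoint subgraphs of this last type, \emph{and} $5H$ contains four edge-disjoint spanning trees together with two such subgraphs.

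It then remains to show that this pair of statements about $5H$ is equivalent to the single statement of the corollary about $(H,\psi)$ itself. One implication is short: the $\bm g=0$ statement is the $(H,\psi)$-statement lifted verbatim into one copy of $H$ inside $5H$, and for the $\bm g=1$ statement one takes two edge-disjoint copies of each of the two spanning trees and a single copy of two of the four unicyclic-component subgraphs, all of which sit edge-disjointly inside $5H$ (the spanning trees already force $H$ to be connected). For the converse I would argue matroidally: the cycle matroid of $5H$ and the signed-graphic matroid $\mathcal{G}(5H,\psi)$ are the $5$-fold parallel extensions of the cycle matroid of $H$ and of $\mathcal{G}(H,\psi)$, so when Nash--Williams' matroid-union rank formula is applied to the two relevant matroid unions over $E(5H)$ the minimising set may be taken of the form ``all five copies of $X$'' for some $X\subseteq E(H)$; the two families of inequalities produced by the $\bm g=0$ and the $\bm g=1$ conditions should then combine into precisely the Nash--Williams inequality that, again by the matroid-union theorem, characterises the existence in $H$ of an edge-disjoint packing of two spanning trees and four bases of $\mathcal{G}(H,\psi)$ (the latter having rank $|V(H)|$ because infinitesimal rigidity forces $\psi$ to be unbalanced).

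The main obstacle is exactly this converse: one has to recover a packing inside $H$ from packings inside the much larger graph $5H$, and for this both the $\bm g=0$ and the $\bm g=1$ conditions are genuinely needed, since each of them alone only yields a Nash--Williams bound carrying $5|E(H)\setminus X|$ — rather than $|E(H)\setminus X|$ — on its left-hand side and so is individually too weak. Verifying that the inequality bookkeeping closes up, and disposing of the small/degenerate configurations, is the step that will require the most care.
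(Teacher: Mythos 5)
Your first two paragraphs are correct and are exactly the intended specialization: the trace computation gives the counts $2$ and $4$ for ${\bm g}=0$ and ${\bm g}=1$, the induced labelings split as $4+2$ and $2+4$ into copies of $\psi$ and trivial labelings, and condition \textbf{(2)} forces the trivially-labeled parts to be spanning trees and the $\psi$-labeled parts to be spanning subgraphs whose components are unicyclic with negative cycle. So Theorem~\ref{thm:body_hinge} specializes to the pair of packing conditions on $5H=(\binom{4}{2}-1)H$ that you state. The genuine gap is your converse, and it cannot be closed: each of the two Nash--Williams systems coming from the ${\bm g}=0$ and ${\bm g}=1$ conditions bounds $5|E(H)\setminus X|$ from below by $2(|V|-1-t(X))+4(|V|-s(X))$ (respectively with $2$ and $4$ swapped), where $t$ and $s$ are the rank functions of the cycle matroid and of ${\cal G}(H,\psi)$; no combination of inequalities whose left-hand sides carry the factor $5$ can yield the bound on $|E(H)\setminus X|$ itself that a packing inside $H$ requires. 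Indeed the equivalence you are after is false: let $G$ consist of two bodies joined by two hinges forming a single free $\mathcal{C}_2$-orbit, so that $H$ is one vertex with one loop of gain $C_2$. This framework is infinitesimally rigid (generically the two hinge lines are distinct, so the relative motion, which must lie in $\mathrm{span}\{\bh(e_1)\}\cap\mathrm{span}\{\bh(e_2)\}$, vanishes; equivalently $5H$ has five negative loops and both packings of Theorem~\ref{thm:body_hinge} exist), yet $H$ plainly does not contain four edge-disjoint subgraphs each containing a negative cycle.

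The resolution is that the corollary has to be read with $(\binom{4}{2}-1)H=5H$ in place of $H$ --- exactly as in the unsymmetric Tay--Whiteley condition ``$5G$ contains six edge-disjoint spanning trees'' quoted in the introduction, and as in the statement of Theorem~\ref{thm:body_hinge} itself. Under that reading your first two paragraphs already constitute the entire derivation of the stated (${\bm g}=0$) condition, and your third and fourth paragraphs are not needed. What must still be recorded is the ${\bm g}=1$ condition (four edge-disjoint spanning trees and two spanning subgraphs of $5H$ with negative unicyclic components): unlike the $\mathcal{C}_s$ case, where the two irreducible representations give identical conditions, for $\mathcal{C}_2$ the two conditions are formally different, and neither the corollary's wording nor your argument shows that the ${\bm g}=0$ packing implies the ${\bm g}=1$ packing.
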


\section{Further work and applications} \label{sec:furtherwork}

In Section~\ref{bbblock}, we constructed new symmetry-adapted rigidity matrices to analyze the infinitesimal rigidity properties of symmetric body-bar frameworks with arbitrary Abelian point group symmetries. Each of these `orbit rigidity matrices' corresponds to an irreducible representation of the point group of the given body-bar framework. However, analogously to the situation for bar-joint frameworks (see \cite[Section 7]{schtan}), it remains open how to construct a $\rho_j$-orbit rigidity matrix of a body-bar framework $(G,\bb)$, where $\rho_j$ is an irreducible representation of the point group of $(G,\bb)$ which is of dimension at least 2. Consequently, it is not yet clear how to construct a full set of orbit rigidity matrices for a body-bar framework with a non-Abelian point group.

Furthermore, note that throughout this paper, we restricted attention to the case where the point group $\Gamma$ of a body-bar framework $(G,\bb)$ acts freely on the vertices of $G$ (i.e., on the bodies of $(G,\bb)$). If we allow $\Gamma$ to act non-freely on the bodies of $(G,\bb)$, then the sizes and entries of the orbit rigidity matrices of $(G,\bb)$ need to be adjusted accordingly.

 For example, suppose $(G,\bb)$ is a $3$-dimensional $\mathcal{C}_s$-symmetric body-bar framework, and a vertex $i$ of $G$ is fixed by the reflection $s$ in $\mathcal{C}_s$ (i.e., $\theta(s) (i)=i$). Then $i$ contributes only three columns to each of the two orbit rigidity matrices of $(G,\bb)$, since the body corresponding to $i$ must `lie on the mirror plane of $s$', and hence has only three fully-symmetric degrees of freedom (translations within the mirror and rotations about the axis perpendicular to the mirror) and also only three anti-symmetric degrees of freedom (translations perpendicular to the mirror and rotations about axes within the mirror). 
 
Consequently, in the case where the point group does not act freely on the bodies of the framework, the construction of the orbit rigidity matrices becomes significantly more messy (see also \cite{BSWW,schtan}), although we do not expect any major new difficulties to arise when  making this extension. However, these modifications to the patterns of the orbit rigidity matrices may give rise to substantial new problems in extending the combinatorial results derived  in Section~\ref{subsec:body_chara} to this more general case.

Finally, we remark that as special cases of our results in Sections~\ref{subsec:body_chara} and \ref{sec:hinge}, we obtain combinatorial characterizations of infinitesimally rigid $3$-dimensional body-bar and body-hinge frameworks which are generic with respect to the point groups $\mathcal{C}_2$ or $\mathcal{D}_2$ - the most common symmetry groups found in proteins \cite{dimers}. In large systems such as  proteins, few if any structural components occupy positions of non-trivial site symmetry, and hence useful global conclusions can be drawn from the study of frameworks under the restriction that the point group acts freely on both the vertex and the edge set of the underlying multigraph.
 Therefore, since our results also lay the foundation to design efficient algorithms for testing symmetry-generic infinitesimal rigidity,
we anticipate that 
our work will also be applied to actual proteins and will lead to a better understanding of the behavior and functionality of symmetric proteins such as dimers.


\providecommand{\bysame}{\leavevmode\hbox to3em{\hrulefill}\thinspace}
\providecommand{\MR}{\relax\ifhmode\unskip\space\fi MR }
\providecommand{\MRhref}[2]{%
  \href{http://www.ams.org/mathscinet-getitem?mr=#1}{#2}
}
\providecommand{\href}[2]{#2}


\begin{thebibliography}{10}





\bibitem{cfgsw}
R.~Connelly, P.W. Fowler, S.D. Guest, B.~Schulze, and W.~Whiteley, \emph{When
  is a symmetric pin-jointed framework isostatic?}, International Journal of
  Solids and Structures \textbf{46} (2009), 762--773.

\bibitem{cunningham}
W.~Cunningham,
\newblock \emph{Improved bounds for matroid partition and intersection algorithms},
\newblock {SIAM Journal on Computing}, 15 (1986), 948--957.





\bibitem{gsw}
S.D. Guest, B.~Schulze, and W.~Whiteley, \emph{When is a symmetric body-bar
  structure isostatic?},  International Journal of Solids and
  Structures \textbf{47} (2010), 2745--2754.

\bibitem{jrtk} 
D.J.~Jacobs, A.~Rader, M.~Thorpe, and L.A.~Kuhn, \emph{Protein flexibility predictions using graph theory},
Proteins: Structure, Function, and Bioinformatics, \textbf{44} (2001),150--165.

\bibitem{jj}
B.~Jackson and T.~Jord{\'a}n,
\emph{The generic rank of body-bar-and-hinge frameworks},
European J.~Combin., \textbf{31} (2010), 574--588.




\bibitem{kattan} N. Katoh and S. Tanigawa, \emph{A proof of the molecular conjecture}, Discrete $\&$ Computational Geometry \textbf{45} (2011), No. 4, 647--700.

\bibitem{Lamanbib}
G. Laman, \emph{On graphs and rigidity of plane skeletal structures}, J. Engrg.
Math. \textbf{4} (1970), 331-340


\bibitem{leestrei} A. Lee and I. Streinu, \emph{Pebble game algorihms and $(k, l)$-sparse graphs}, 
Discrete Mathematics, \textbf{308} (2008), 1425--1437.










\bibitem{oxley}
J.~Oxley,
\newblock \emph{Matroid theory},
\newblock Oxford University Press, USA, 2nd edition, 2011.


\bibitem{portaetal} J. Porta, L. Ros, B. Schulze, A. Sljoka and W. Whiteley, \emph{On the Symmetric Molecular Conjectures}, to appear in Computational Kinematics: International Workshop on Computational Kinematics (CK2013), Springer Verlag, 175-184, 2014

 





\bibitem{BS3}
B.~Schulze, \emph{Symmetric versions of {L}aman's {T}heorem}, Discrete and Computational Geometry \textbf{44} (2010), No. 4, 946--972.

\bibitem{BS4} \bysame, \emph{Symmetric Laman theorems for the groups $C_2$ and $C_s$},
The Electronic Journal of Combinatorics \textbf{17} (2010), No. 1, R154, 1--61.


\bibitem{schtan} B.~Schulze and S.~Tanigawa, \emph{Infinitesimal rigidity of symmetric bar-join frameworks}, preprint
(see Sections 1-6 in arXiv:1308.6380), 2013


\bibitem{BSWW} B.~Schulze and W.~Whiteley, \emph{The orbit rigidity matrix of a symmetric framework},
Discrete and Computational Geometry \textbf{46} (2011), No. 3, 561--598.



\bibitem{dimers} B.~Schulze, A. Sljoka and W. Whiteley, \emph{How does symmetry impact the flexibility of proteins?}, Phil. Transa. Royal Soc. A \textbf{372} (2014), No. 2008, 20120041.


\bibitem{tay89}
T.-S. Tay,
\newblock \emph{Linking $(n- 2)$-dimensional panels in $n$-space II:$(n- 2,2)$-frameworks and body and hinge structures},
\newblock {Graphs and Combinatorics}, 5 (1989), 245--273.

\bibitem{tay91}
\bysame,
\newblock \emph{Linking $(n- 2)$-dimensional panels in $n$-space I:$(k-1, k)$-graphs and $(k-1, k)$-frames},
\newblock {Graphs and Combinatorics}, 7 (1991), 289--304.



\bibitem{Tay84}  \bysame, \emph{Rigidity of multi-graphs, linking rigid bodies in $n$-space}, J. Comb. Theory, B \textbf{36} (1984), 95--112.

 \bibitem{TW1} T.-S. Tay and W. Whiteley,  \emph{Recent advances in generic rigidity of structures}, Structural Topology \textbf{9} (1985), 31--38.





\bibitem{ww} N. White and W. Whiteley, \emph{
The algebraic geometry of motions of bar-and-body frameworks}, SIAM Journal on Algebraic Discrete Methods \textbf{8} (1987), 1--32.

\bibitem{wwmatun} W. Whiteley,  \emph{The union of matroids and the rigidity of frameworks}, SIAM Journal on Discrete
Mathematics \textbf{1} (1988), 237--255.

\bibitem{W1}
\bysame, \emph{Some {M}atroids from {D}iscrete {A}pplied {G}eometry},
  Contemporary Mathematics, AMS \textbf{197} (1996), 171--311.

\bibitem{Wbio}  \bysame,  \emph{Counting out the flexibility of proteins},
Physical Biology \textbf{2} (2005), 116--126.




%


\end{thebibliography}
\end{document}